\definecolor{grn}{rgb}{0,0.6,0}
\definecolor{mrn}{rgb}{0.3,0,0}
\definecolor{blue}{rgb}{0,0,0.7}
\definecolor{Mygray}{rgb}{0.75,0.75,0.75}
\definecolor{auburn}{rgb}{0.43, 0.21, 0.1}
\definecolor{britishracinggreen}{rgb}{0.0, 0.26, 0.15}
\definecolor{taupe}{rgb}{0.28, 0.24, 0.2}
\newtheorem{theorem}{Theorem}[section]
\newtheorem{propn}[theorem]{Proposition}
\newtheorem{cor}[theorem]{Corollary}
\newtheorem{lemma}[theorem]{Lemma}
\newtheorem{rmk}[theorem]{Remark}
\newcommand{\eps}{\varepsilon}
\newcommand{\Z}{\mathbb{Z}}
\newcommand{\Q}{\mathbb{Q}}
\newcommand{\Mod}[1]{\ (\mathrm{mod}\ #1)}
\newcommand{\p}{\mathfrak{p}}
\newcommand{\q}{\mathfrak{q}}
\begin{document}
\baselineskip=14.5pt
\title[ $\Z_2$-extensions with $\Z/2\Z$ as $2$-class group]{ $\Z_2$-extension of real quadratic fields with $\Z/2\Z$ as $2$-class group at each layer}

\author{H Laxmi and Anupam Saikia}
\address[H Laxmi and Anupam Saikia]{Department of Mathematics, Indian Institute of Technology Guwahati, Guwahati - 781039, Assam, India}

\email[H Laxmi]{hlaxmi@iitg.ac.in}

\email[Anupam Saikia]{a.saikia@iitg.ac.in}
\renewcommand{\thefootnote}{}

\footnote{2020 \emph{Mathematics Subject Classification}: Primary 11R29, Secondary 11R11, 11R23.}

\footnote{\emph{Key words and phrases}: Order of 2-class groups, Iwasawa invariants, structure of Iwasawa module, Greenberg's conjecture.}

\footnote{\emph{We confirm that all the data are included in the article.}}

\renewcommand{\thefootnote}{\arabic{footnote}}
\setcounter{footnote}{0}

\begin{abstract}
  
Let $K= \mathbb{Q}(\sqrt{d})$ be a real quadratic field with $d$ having three distinct prime factors. We show that the $2$-class group of each layer in the $\Z_2$-extension of $K$ is $\Z/2\Z$ under certain elementary assumptions on the prime factors of $d$. In particular, it validates Greenberg's conjecture on the vanishing of the Iwasawa $\lambda$-invariant for a new family of infinitely many real quadratic fields. 

\end{abstract}

\maketitle
\section{Introduction}
The investigation of class groups of number fields is one of the frequently explored problems in algebraic number theory. Iwasawa examined the variation of the $p$-Sylow subgroups of class groups of the intermediate fields in a $\Z_p$-extension of a number field for a prime $p$. For a number field $K$, a Galois extension $K_\infty$ is called a $\Z_p$-extension if the Galois group  $\mbox{Gal}(K_\infty/K)$ is topologically isomorphic to $\Z_p$, the additive group of $p$-adic integers. For each natural number $n$, $K_{\infty}$ contains a unique subfield $K_n$ such that $\mbox{Gal}(K_n/K)$ is isomorphic to $\Z/p^n\Z$. Let $A(K_n)$ denote the $p$-Sylow subgroup of the class group of $K_n$ and $p^{e_n}$ denote its order. Iwasawa \cite{iwasawa} showed that for sufficiently large positive integers $n$, there exist non-negative constants $\mu(K_{\infty}/K)$, $\lambda(K_{\infty}/K)$, and $\nu(K_{\infty}/K)$ such that 
$$e_n = \mu(K_{\infty}/K)\cdot p^n + \lambda(K_{\infty}/K)\cdot n + \nu(K_{\infty}/K).$$   
Iwasawa obtained his result by studying the structure of the inverse limit $X_\infty = \lim\limits_{\substack{ \longleftarrow \\ n}}A(K_n)$, formed with respect to the norm maps, as a module over a ring $\Lambda$, known as the Iwasawa algebra for the extension $K_\infty/K$. The Iwasawa algebra is defined as $\Lambda = \lim\limits_{\substack{ \longleftarrow \\ n}}\Z_p[\Gamma_n]$, where $\Gamma_n$ denotes ${\rm{Gal}}(K_n/K)$ and the inverse limit is taken with respect to the natural projection maps from $\Gamma_m$ to $\Gamma _n$ for $m\geq n$. The natural action of $\Z_p[\Gamma_n]$ on $A(K_n)$ is compatible as we vary $n$, hence $X_\infty$ has the structure of a $\Lambda$-module. It turns out that $X_\infty$ is a finitely generated torsion $\Lambda$-module, and the constants $\mu(K_{\infty}/K)$ and $\lambda(K_{\infty}/K)$ are the Iwasawa invariants associated with the $\Lambda$-module $X_\infty$ \cite[Theorem 13.12]{washington_book}. 

\smallskip

For a totally real field $K$, there is a unique $\Z_p$-extension known as the cyclotomic $\Z_p$-extension of $K$. For a prime $p$, let $\zeta_{p^{n+1}}$ denote a primitive $p^{n+1}$-th root of unity in $\mathbb{C}$. For $K=\Q$, let $\Q_0 := \Q$ and $\Q_n$ be the unique extension of degree $p^n$ over $\Q$ contained in $\Q(\zeta_{p^{n+1}})$ when $p$ is an odd prime, and $\Q_n$ be the maximal real subfield of $\Q(\zeta_{2^{n+2}})$ for $p=2$. The extension $\Q_{\infty}:=\bigcup\limits_{n = o}^{\infty}\Q_n$ is known as the cyclotomic $\Z_p$-extension of $\Q$. For an arbitrary number field $K$, its cyclotomic $\Z_p$-extension $K_{\infty}$ is the infinite union $\bigcup\limits_{n = o}^{\infty}K_n$, where $K_n$ is the compositum of $K$ with $\Q_n$ for each $n$. Iwasawa conjectured that $\mu(K_\infty/K)$ must vanish for the cyclotomic $\Z_p$-extension $K_\infty$ over any number field $K$. In \cite{greenberg}, Greenberg conjectured that both $\mu(K_\infty/K)$ and $\lambda(K_\infty/K)$ must vanish for the $\Z_p$-extension $K_\infty/K$ when $K$ is a totally real field.   Following these conjectures, Ferrero and Washington proved that $\mu(K_\infty/K) = 0$ for the cyclotomic $\Z_p$-extension of a number field $K$ when $K/\Q$ is an abelian extension (cf. \cite{ferrero-washington}). Greenberg's conjecture has not been completely settled for number fields other than $\Q$. Some of the partial progress towards Greenberg's conjecture can be found in \cite{asjc-rama}, \cite{fukuda-komatsu}, \cite{gras2}, \cite{ichimura2}, \cite{kraft-schoof}, \cite{kumakawa2}, \cite{mizu_paper}, \cite{mouhib}, \cite{mouhib-mova}, \cite{nishino}, \cite{ozaki-taya}, \cite{yamamoto}  etc.

\smallskip
Henceforth, $K$ will denote a real quadratic field $\Q(\sqrt{d})$ and  $K_\infty$ will denote the (cyclotomic) $\Z_2$-extension of $K$. We shall simply write $\mu$ and $\lambda$ to denote the corresponding Iwasawa invariants for the $\Z_2$-extension $K_\infty/K$. The intermediate fields $K_n$ are of the form $\Q(\sqrt{d}, a_n)$, where $a_0 = 0$, $a_n = \sqrt{ 2 + a_{n-1}}$. Thus, $K_0 = K$, $K_1 = \Q(\sqrt{2}, \sqrt{d})$, $K_2 = \Q(\sqrt{2 + \sqrt{2}}, \sqrt{d})$, and so on. 
The field $K_1$ is a bi-quadratic extension of $\Q$, hence its arithmetic can be studied by examining that of its subfields $K$, $\Q(\sqrt{2})$, and $\Q(\sqrt{2d})$. For more about $2$-class groups and multi-quadratic fields, interested readers may refer to \cite{azizi}, \cite{ben-lem-sny}, \cite{brown-parry} and \cite{ouyang-zhang}.

\smallskip

In \cite{mouhib-mova}, Mouhib and Movahhedi considered the $\Z_2$-extension of $K = \Q(\sqrt{\ell_1\ell_2\ell_3})$, where $\ell_1$, $\ell_2$ and $\ell_3$ are distinct primes satisfying  $\ell_1 \equiv 5 \pmod 8$, $\ell_2 \equiv 3 \pmod 8$, and $\ell_3 \equiv 3 \pmod 4$. They proved that the Iwasawa module $X_{\infty}$ corresponding to the $\Z_2$-extension of $K$ is a (finite or infinite) cyclic group (cf. Theorem 3.8, part (iv), \cite{mouhib-mova}). Further, they proved that if $\ell_3 \equiv 7 \pmod 8$, the corresponding $\lambda$-invariant is equal to $0$ (cf. Theorem 4.4, \cite{mouhib-mova}), and thus, $X_\infty$ is not only cyclic but finite as well in this case. Driven by their results, we focus on the finer structure of $X_{\infty}$ for the aforementioned fields. In particular, we show that $X_{\infty}$ is finite and cyclic of order $2$  when the primes satisfy certain Legendre symbol conditions. We also verify Greenberg's conjecture on vanishing of $\lambda$-invariant for some additional cases. In this article, the number fields $K$ that we shall revolve around are of the following kind:
\begin{equation}\label{cond1}
K = \Q(\sqrt{p_1q_1q_2}),\ p_1 \equiv 5 \Mod{8},\ q_1 \equiv 3 \Mod{8},\ q_2 \equiv 3 \Mod{8},
\end{equation}
\begin{equation}\label{cond2}
K = \Q(\sqrt{p_1q_1q_2}),\ p_1 \equiv 5 \Mod{8},\ q_1 \equiv 7 \Mod{8},\ q_2 \equiv 3 \Mod{8},
\end{equation}
where $p_1$, $q_1$ and $q_2$ denote three distinct primes.
We shall use $A_n$ to denote the $2$-class group of $K_n$, where $K_n$ is the $n^{\rm{th}}$ layer in the $\Z_2$-extension of $K$, in the context of the above conditions. In general, for any number field $L$, we shall use the symbol $\mathcal{C}l(L)$ to denote its class group, $A(L)$ for its $2$-class group, and $h(L)$ for its class number. We prove the following results in this article:

\smallskip

\begin{theorem}\label{(5,3,3,-1)}
Let $K = \Q(\sqrt{p_1q_1q_2})$ be a real quadratic number field such that $p_1 \equiv 5 \pmod 8$, $q_1, q_2 \equiv 3 \pmod 8$. Then, $\#A_1 = \#A_0$ if $\left(\dfrac{q_1q_2}{p_1}\right) = -1$. 
\end{theorem}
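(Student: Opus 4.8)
The plan is to determine $\#A_0$ exactly by genus theory and then to compute $\#A_1$ by passing through the three quadratic subfields of the biquadratic field $K_1=\Q(\sqrt{2},\sqrt{d})$, where $d=p_1q_1q_2$. First I would compute $\#A_0$. Since $d\equiv 5\Mod 8$ we have $d\equiv 1\Mod 4$, so $\mathrm{disc}(K)=d$ and exactly the three primes $p_1,q_1,q_2$ ramify in $K/\Q$; hence the narrow $2$-class group has $2$-rank $3-1=2$. The $4$-rank is governed by the R\'edei matrix built from the prime discriminants $p_1,-q_1,-q_2$ and the symbols $\left(\frac{p_1}{q_1}\right),\left(\frac{p_1}{q_2}\right),\left(\frac{q_1}{q_2}\right)$. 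The hypothesis $\left(\frac{q_1q_2}{p_1}\right)=-1$ forces the diagonal entry attached to $p_1$ to be nonzero, and a short case analysis over the remaining free symbols shows the matrix always has rank $2$, so the $4$-rank is $0$ and the narrow $2$-class group is $(\Z/2\Z)^2$. Because $q_1\equiv 3\Mod 4$ divides $d$, the fundamental unit $\eps_K$ has norm $+1$, so the wide and narrow $2$-class numbers differ by a factor $2$; this gives $A_0\cong\Z/2\Z$.

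Next, to compute $\#A_1$, I would apply Kuroda's class number formula to the totally real $V_4$-extension $K_1/\Q$, whose quadratic subfields are $\Q(\sqrt{2})$, $K=\Q(\sqrt{d})$ and $\Q(\sqrt{2d})$:
$$\#A_1=\tfrac14\,q(K_1)\,\#A(\Q(\sqrt{2}))\,\#A(K)\,\#A(\Q(\sqrt{2d})),$$
where $q(K_1)=[E_{K_1}:E_{\Q(\sqrt2)}E_KE_{\Q(\sqrt{2d})}]$ is the unit index, a power of $2$. Here $\#A(\Q(\sqrt{2}))=1$ and $\#A(K)=\#A_0=2$, so on $2$-parts the assertion $\#A_1=\#A_0=2$ is equivalent to $q(K_1)\cdot\#A(\Q(\sqrt{2d}))=4$, that is, to $\#A(\Q(\sqrt{2d}))=4$ together with $q(K_1)$ odd.

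For $\Q(\sqrt{2d})$ the four primes $2,p_1,q_1,q_2$ ramify, so the narrow $2$-rank is $3$; I would run the R\'edei computation again, using the same congruences together with $\left(\frac{q_1q_2}{p_1}\right)=-1$, to show the $4$-rank is $0$, and then use that the fundamental unit of $\Q(\sqrt{2d})$ has norm $+1$ (again because $q_1\equiv 3\Mod 4$) to drop the wide $2$-rank to $2$, giving $\#A(\Q(\sqrt{2d}))=4$. It then remains to prove $q(K_1)=1$, i.e.\ that $E_{K_1}$ is generated by the units of the three quadratic subfields; I would settle this by testing whether the relevant products of $1+\sqrt{2}$, $\eps_K$ and $\eps_{\Q(\sqrt{2d})}$ can become squares in $K_1$ and ruling each case out with the $2$-adic and congruence data above. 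Substituting into Kuroda's formula then yields $\#A_1=\tfrac14\cdot 1\cdot 2\cdot 4=2=\#A_0$.

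I expect the unit index $q(K_1)$ to be the main obstacle: the genus-theoretic inputs are essentially mechanical, but computing $q(K_1)$ exactly requires controlling the full unit group of the biquadratic field and is the delicate point. As a consistency check and an alternative route, the ambiguous class number (Chevalley) formula for the ramified quadratic extension $K_1/K$ gives $\#A_1^{G}=\#A_0\big/[E_K:E_K\cap N_{K_1/K}K_1^{\times}]$; since the prime above $2$ is the only ramified place and $\#A_1^{G}=\#A_{1,G}=\#A_0$ (the norm identifies $A_{1,G}$ with $A_0$ as $2$ is inert in $K$ and totally ramified in $K_\infty/K$), the unit index there equals $1$ and $\#A_1^{G}=2$. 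Because $X_\infty$ is cyclic by Mouhib--Movahhedi, $A_1$ is a cyclic $\Z_2[\mathrm{Gal}(K_1/K)]$-module with $\#A_1^{G}=2$; finishing along this line amounts precisely to excluding a nontrivial ($\sigma=-1$) Galois action on $A_1$, which is the very information that the Kuroda computation of $\#A(\Q(\sqrt{2d}))$ and $q(K_1)$ supplies.
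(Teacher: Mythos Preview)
Your plan is sound and would work, but it takes a more laborious route than the paper's. Both you and the paper establish $\#A_0=2$ (the paper via its Lemma~2.3, you via the R\'edei matrix and the narrow/wide comparison) and $\#A(F)=4$ for $F=\Q(\sqrt{2d})$ (both via R\'edei--Reichardt). The divergence is in the endgame.

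You propose to finish with Kuroda's formula, which reduces everything to proving $Q(K_1)=1$, i.e.\ that none of $\sqrt{\eps_1},\sqrt{\eps_2},\sqrt{\eps_1\eps_2}$ lies in $K_1$. This is precisely the strategy the paper employs for Theorems~1.3 and~1.5, where it pins down each $K_1(\sqrt{\eps_i})$ as an explicit genus-type extension via Artin-symbol computations in $L(F)/F$. The same analysis can be carried out here under $\bigl(\tfrac{q_1q_2}{p_1}\bigr)=-1$ and does yield $Q(K_1)=1$, but you have only sketched this, and it is genuine work.

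For Theorem~1.1 the paper avoids that computation entirely. It uses the bound $\#A_1\le \#A_1^{\tau+1}\cdot\#A(F)/2$ from Lemma~2.4 (obtained by applying the genus formula to the \emph{unramified} extension $K_{1}/K_0'$) together with $\#A_1^{\langle\tau\rangle}\le\#A_0=2$ (genus formula for $K_1/K$, a single ramified prime) to get $\#A_1\le 4$. Since $A_1$ is cyclic, either $\#A_1=2$ or $A_1\cong\Z/4\Z$; in the latter case the involution $\tau$ must act as $\pm 1$. The fixed-point bound rules out $\tau=+1$, and if $\tau=-1$ then $A_1^{\tau+1}$ is trivial, so the Lemma~2.4 inequality forces $\#A_1\le \#A(F)/2=2$, a contradiction. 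Hence $\#A_1=2$, with no unit-index computation at all.

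Your closing paragraph correctly identifies that the Chevalley route leaves one needing to exclude the $\tau=-1$ action on a cyclic $A_1$; what you did not spot is that this exclusion comes for free from the $K_0'$-bound once you observe that $A_1^{\tau+1}=\{1\}$ when $\tau$ acts as $-1$. So Kuroda is unnecessary here, though it is indeed the right tool for the other theorems in the paper, where $\#A_0\ge 4$ and this short-cut no longer closes the gap.
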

 
\begin{cor}\label{lambda of 5,3,3,-1}
Let $K = \Q(\sqrt{p_1q_1q_2})$ and $F = \Q(\sqrt{2p_1q_1q_2})$ be real quadratic number fields such that $p_1 \equiv 5 \pmod 8$, $q_1, q_2 \equiv 3 \pmod 8$, and $\left(\dfrac{q_1q_2}{p_1}\right) = -1$. Then, $A_n \cong \Z/2\Z$ for all $n\geq 1$, and the Iwasawa module $X_{\infty}$ corresponding to the $\Z_2$-extension of $K$ is isomorphic to $\Z/2\Z$. In particular, the $\lambda$-invariant for the $\Z_2$-extension of $K$ as well as $F$ is equal to $0$. 
\end{cor}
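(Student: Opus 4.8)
The plan is to deduce the corollary from Theorem~\ref{(5,3,3,-1)} together with two external inputs: the cyclicity of $X_\infty$ over the Iwasawa algebra $\Lambda = \Z_2\llbracket \Gamma\rrbracket$ proved by Mouhib and Movahhedi, and a stabilization criterion for the orders of the $2$-class groups in a $\Z_2$-extension. First I would fix the ramification picture. Writing $d = p_1q_1q_2$, we have $d \equiv 5 \Mod{8}$, so $2$ is inert in $K$; thus $K$ has a single prime above $2$, and this prime is totally ramified in the cyclotomic $\Z_2$-extension $K_\infty/K$. In particular all ramified primes are totally ramified already from the ground level $n=0$, and the norm maps $A_{n+1}\to A_n$ are surjective.

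Next I would run the stabilization argument. Theorem~\ref{(5,3,3,-1)} gives $\#A_1 = \#A_0$, and because the ramification is total from level $0$, Fukuda's theorem upgrades this single coincidence to $\#A_n = \#A_0$ for all $n\ge 0$. Hence $e_n$ is eventually constant, so $\mu = \lambda = 0$ (the vanishing of $\mu$ is in any case guaranteed by Ferrero--Washington, since $K$ is abelian over $\Q$), and $X_\infty$ is finite. Since the norm maps are surjective and preserve orders, they are isomorphisms, and therefore $X_\infty = \varprojlim_n A_n \cong A_0$.

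It then remains to pin down $A_0 = A(K)$. Here the cyclicity of $X_\infty$ enters: the projection $X_\infty \to A_0$ has trivial $\Gamma$-action on its target, so it factors through the coinvariants $(X_\infty)_\Gamma = X_\infty/(\gamma-1)X_\infty$, which is cyclic over $\Z_2$ because $X_\infty$ is cyclic over $\Lambda$; thus $A_0$ is a cyclic $2$-group. Genus theory for $K=\Q(\sqrt{p_1q_1q_2})$ shows that the wide $2$-class group has $2$-rank $1$: the narrow genus rank is $2$ (three primes ramify), and it drops by one because $N(\epsilon)=+1$, which is forced by $q_1\equiv 3\Mod{4}$. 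The hypothesis $\left(\frac{q_1q_2}{p_1}\right)=-1$ then forces the associated $4$-rank to vanish, so $\#A_0 = 2$. Combining, $A_0\cong \Z/2\Z$, hence $X_\infty\cong\Z/2\Z$ with $\lambda=0$, verifying Greenberg's conjecture for $K$.

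Finally, for $F=\Q(\sqrt{2p_1q_1q_2})$ I would exploit that $F$ and $K$ share their higher layers. Since $\sqrt2 = a_1 \in B_n$ for every $n\ge 1$, one has $F_n = \Q(\sqrt{2d},a_n) = \Q(\sqrt d, a_n) = K_n$ for all $n\ge 1$, so $A(F_n) = A_n$ for $n\ge 1$. As the inverse limit defining $X_\infty$ depends only on a cofinal tail of the tower, $X_\infty(F)\cong X_\infty(K)\cong \Z/2\Z$ and $\lambda_2(F) = \lambda_2(K) = 0$. The step I expect to be the main obstacle is the exact determination $A_0\cong \Z/2\Z$: one must check carefully, via the R\'edei/$4$-rank computation, that the condition $\left(\frac{q_1q_2}{p_1}\right)=-1$ really pins the order down to $2$, and must confirm that the hypotheses of the stabilization criterion (a single prime, totally ramified from level $0$) are genuinely met so that Fukuda's theorem applies.
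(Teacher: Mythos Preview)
Your argument is correct and follows the same skeleton as the paper: $d\equiv 5\pmod 8$ makes $2$ inert in $K$ so the unique prime above $2$ is totally ramified from level $0$; Theorem~\ref{(5,3,3,-1)} plus Fukuda's criterion then stabilize $\#A_n$; and $F_n=K_n$ for $n\ge 1$ transfers everything to $F$. The one place you diverge from the paper is in establishing $\#A_0=2$. The paper simply quotes Lemma~\ref{A_0 = 2} (already used inside the proof of Theorem~\ref{(5,3,3,-1)}), whose proof is a short genus-field/Artin-symbol/Nakayama argument; you instead rederive it via a R\'edei--Reichardt $4$-rank computation for $A^+(K)$ combined with the $\Lambda$-cyclicity of $X_\infty$ from Mouhib--Movahhedi. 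Both routes are valid, but your detour through Mouhib--Movahhedi is unnecessary (genus theory alone already gives $\operatorname{rank}A_0=1$, as you also note), and one small imprecision is worth flagging: the drop of the wide $2$-rank from $2$ to $1$ is governed by $-1\notin N_{K/\Q}(K^\times)$, not directly by $N(\varepsilon)=+1$; it is the presence of a prime $q_1\equiv 3\pmod 4$ that forces both conditions, so your conclusion stands.
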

\noindent Note that while cyclicity of $X_\infty$ under the assumptions of Corollary \ref{lambda of 5,3,3,-1} was shown in \cite{mouhib-mova}, our result proves that $X_\infty$ is in fact a finite group of order $2$, resulting in vanishing of the $\lambda$-invariant. 

\begin{theorem}\label{(5,3,3,1)}
Let $K = \Q(\sqrt{p_1q_1q_2})$ be a real quadratic number field such that $p_1 \equiv 5 \pmod 8$, $q_1, q_2 \equiv 3 \pmod 8$, $\left( \dfrac{q_1}{p_1} \right) =1$, and $\left( \dfrac{q_2}{p_1} \right) =1$. Then, the ideal $\mathfrak{p}_1$ in $K$ lying above $p_1$ is principal if and only if $\#A_1 \neq \#A_0$. 
\end{theorem}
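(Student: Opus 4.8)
The plan is to compare $\#A_1$ with $\#A_0$ through Kuroda's class number formula for the real biquadratic field $K_1 = \Q(\sqrt{2},\sqrt{p_1q_1q_2})$, whose three quadratic subfields are $\Q(\sqrt 2)$ (of class number $1$), $K$, and $F = \Q(\sqrt{2p_1q_1q_2})$. On $2$-parts this formula reads
\[
\#A_1 \;=\; 2^{c}\,q(K_1)\,\#A_0\,\#A(F),\qquad q(K_1)=\big[E_{K_1}:E_{\Q(\sqrt2)}E_KE_F\big],
\]
with $c$ the explicit constant coming from the unit regulators of a totally real $V_4$-extension. Since $h(\Q(\sqrt2))=1$, this reduces the statement $\#A_1 = \#A_0$ to the single identity $2^{c}\,q(K_1)\,\#A(F)=1$, so the theorem becomes the claim that this minimal value is attained precisely when $\mathfrak p_1$ is \emph{not} principal.

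First I would record the structure of $A_0$. As $q_1\equiv 3\Mod 4$ divides $d=p_1q_1q_2$, the fundamental unit of $K$ has norm $+1$, and with $t=3$ ramified primes genus theory gives $2\text{-rank}\,Cl^+(K)=2$. Under the hypotheses $\left(\frac{q_1}{p_1}\right)=\left(\frac{q_2}{p_1}\right)=1$, Rédei's reciprocity fixes the $4$-rank and the position of the class $[\mathfrak p_1]$ among the genus classes generated by $[\mathfrak p_1],[\mathfrak q_1],[\mathfrak q_2]$, which satisfy $[\mathfrak p_1][\mathfrak q_1][\mathfrak q_2]=1$ in $Cl(K)$. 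The step I would carry out here is to translate ``$\mathfrak p_1$ principal'' into an intrinsic statement about $A_0$: using the genus characters and the relation $(\sqrt d)=\mathfrak p_1\mathfrak q_1\mathfrak q_2$, principality of $\mathfrak p_1$ is equivalent to $[\mathfrak q_1]=[\mathfrak q_2]$ in $A_0$, i.e. to $A_0$ having $2$-rank $2$ rather than being cyclic.

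Next I would evaluate the two remaining factors. The order $\#A(F)$ is again handled by genus theory, now for $F$ with the four ramified primes $2,p_1,q_1,q_2$, together with the analogous Rédei computation; and the unit index $q(K_1)$ is computed by deciding which products of the fundamental units $\varepsilon_{\Q(\sqrt2)},\varepsilon_K,\varepsilon_F$ become squares in $K_1$ --- equivalently which of them are norms from $K_1$ down to the quadratic subfields. Here Hasse's norm theorem reduces everything to local norm-residue symbols, and since the only prime ramifying in $K_1/K$ is the inert prime above $2$, these collapse to Hilbert symbols at $2$. The goal of this step is to show that $2^{c}q(K_1)\#A(F)$ rises from its minimal value by exactly one factor of $2$ precisely when the $2$-rank of $A_0$ is $2$.

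The main obstacle is this last linkage: proving that the single factor of $2$ in $q(K_1)\#A(F)$ is governed by the \emph{same} arithmetic datum as the principality of $\mathfrak p_1$. This rests on the delicate Hilbert-symbol computation at $2$ and on the sign and norm behaviour of the fundamental units, which is the technical heart of the argument. As an independent check on the bookkeeping I would run Chevalley's ambiguous class number formula for $K_1/K$, namely $\#A_1^{\mathrm{Gal}(K_1/K)} = \#A_0\big/\big(E_K:E_K\cap N_{K_1/K}K_1^{\times}\big)$ --- only the prime above $2$ ramifies, so its ramification factor cancels $[K_1:K]$ --- which isolates the same unit/norm index and must be consistent with the value of $\#A_1$ obtained from Kuroda's formula.
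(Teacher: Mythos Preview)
Your overall framework---Kuroda's class number formula for the biquadratic field $K_1$---is exactly the paper's. With $\#A(\Q(\sqrt2))=1$ and, under the present Legendre symbol hypotheses, $\#A(F)=4$ (this is Lemma~\ref{order A(F)(5,3,3)}, which you would need to prove rather than leave open), Kuroda collapses to $\#A_1 = Q(K_1)\cdot\#A_0$. So the theorem is precisely the claim that $Q(K_1)=2$ if and only if $\mathfrak p_1$ is principal, and this is the crux in both your outline and the paper.

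Two points in your sketch are wrong, however. First, $A_0$ is \emph{always} cyclic here: the ordinary genus field of $K$ is $K(\sqrt{p_1})$, a quadratic extension, so $\mathrm{rank}\,A_0=1$ regardless of whether $\mathfrak p_1$ is principal. Your equivalence ``$\mathfrak p_1$ principal $\Leftrightarrow [\mathfrak q_1]=[\mathfrak q_2]$'' is correct and matches the paper's Claim~3, but the gloss ``i.e.\ $A_0$ has $2$-rank $2$'' is false and would derail the argument. Second, the asserted equivalence between a unit being a square in $K_1$ and that unit being a norm from $K_1$ to a quadratic subfield does not hold in general, so the reduction to Hilbert symbols at $2$ is not justified as written; in any case that local computation would not by itself detect the global datum ``$\mathfrak p_1$ principal''.

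The paper bypasses both issues by a more concrete device: it identifies the field $K_1(\sqrt{\eps_i})$ explicitly. Comparing Artin symbols in $L(F)/F$ gives $[\mathfrak p_1']=[\ell']$ in $A(F)$, and squaring the resulting ideal relation yields $K_1(\sqrt{\eps_2})=K_1(\sqrt{p_1})$ unconditionally. On the $K$ side, if $\mathfrak p_1$ is principal then $\mathfrak p_1\sim\langle 2\rangle$ forces $K_1(\sqrt{\eps_1})=K_1(\sqrt{p_1})$ as well, so $\sqrt{\eps_1\eps_2}\in K_1$ and $Q(K_1)=2$; if $\mathfrak p_1$ is not principal then, via $[\mathfrak q_1]\neq[\mathfrak q_2]$, one gets $K_1(\sqrt{\eps_1})=K_1(\sqrt{q_j})\neq K_1(\sqrt{p_1})$, whence $\sqrt{\eps_1\eps_2}\notin K_1$ and $Q(K_1)=1$. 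This explicit pinning down of $K_1(\sqrt{\eps_i})$ through ideal equivalences is the missing idea that replaces your proposed Hilbert-symbol step and supplies the link you correctly flagged as the main obstacle.
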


\begin{cor}\label{lambda of (5,3,3,1)}
Let $K = \Q(\sqrt{p_1q_1q_2})$ be a real quadratic number field such that $p_1 \equiv 5 \pmod 8$, $q_1, q_2 \equiv 3 \pmod 8$, $\left( \dfrac{q_1}{p_1} \right) =1$, and $\left( \dfrac{q_2}{p_1} \right) =1$. If the ideal $\mathfrak{p}_1$ is non-principal, then the Iwasawa module $X_{\infty}$ corresponding to $K$ is isomorphic to $\Z/2^m\Z$ for some $m \geq 2$. Consequently, the Iwasawa $\lambda$-invariant for such fields is equal to 0 if there are no integers $a$ and $b$ such that $a^2 - b^2p_1q_1q_2 = 4p_1$. Under these circumstances, the Iwasawa module corresponding to $F = \Q(\sqrt{2p_1q_1q_2})$ has the same structure, with $\lambda = 0$. 
\end{cor}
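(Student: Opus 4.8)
The plan is to bootstrap from two inputs already available: the $\Lambda$-cyclicity of $X_{\infty}$ proved by Mouhib and Movahhedi, and the stability dichotomy of Theorem \ref{(5,3,3,1)}. First I would fix the ramification picture. Writing $d = p_1q_1q_2$, the congruences $p_1 \equiv 5$, $q_1 \equiv q_2 \equiv 3 \pmod 8$ give $d \equiv 5 \pmod 8$, so $2$ is inert in $K$; the unique prime of $K$ above $2$ is then totally ramified in the cyclotomic $\Z_2$-extension $K_{\infty}/K$, and it is the only ramified prime there. This is precisely the hypothesis under which Fukuda's stabilization theorem may be applied starting from the ground layer.

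Assuming $\mathfrak{p}_1$ non-principal, Theorem \ref{(5,3,3,1)} gives $\#A_1 = \#A_0$. By Fukuda's theorem the equality of consecutive orders (here at $n=0$, legitimate because the ramified prime is totally ramified from $K$ on) propagates to $\#A_n = \#A_0$ for all $n$. Thus $e_n$ is eventually constant, whence $\mu = \lambda = 0$ and $X_{\infty}$ is finite. Since the norm maps $A_{n+1}\to A_n$ are surjective and now bijective, passing to the inverse limit yields $X_{\infty} \cong A_0$; together with $\Lambda$-cyclicity this forces $X_{\infty} \cong \Z/2^m\Z$. To obtain $m \ge 2$ I would run the genus-theoretic $4$-rank (R\'edei) computation for $K$: the conditions $(q_1/p_1)=(q_2/p_1)=1$ degenerate the governing Legendre-symbol matrix so that $4 \mid \#A_0$, while cyclicity forces the $2$-rank of $A_0$ to be one; hence $A_0 \cong \Z/2^m\Z$ with $m \ge 2$.

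It remains to recast non-principality as the stated norm condition. If $\mathfrak{p}_1 = (\alpha)$ then $|N(\alpha)| = p_1$, i.e.\ $a^2 - b^2 d = \pm 4p_1$ is solvable, and conversely any such solution produces a generator of $\mathfrak{p}_1$. The minus sign can be excluded: reducing $p_1 a'^2 - q_1 q_2 b^2 = -4$ modulo $q_1$ and using $p_1 \equiv 1 \pmod 4$, $q_1 \equiv 3 \pmod 4$, and $(p_1/q_1)=1$ shows the left-hand side lies in $\{0\}\cup(\text{squares})$ while $-4$ is a nonzero non-residue mod $q_1$, a contradiction. Therefore, under the present Legendre conditions, $\mathfrak{p}_1$ is non-principal exactly when $a^2 - b^2 d = 4p_1$ has no integer solution, which is the advertised sufficient condition for $\lambda = 0$.

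For $F = \Q(\sqrt{2d})$ I would simply note that $F_1 = \Q(\sqrt 2,\sqrt{2d}) = \Q(\sqrt 2,\sqrt d) = K_1$, hence $F_n = K_n$ for every $n \ge 1$ and $F_{\infty} = K_{\infty}$. The two cyclotomic towers therefore share all layers beyond the first, the inverse limits of their $2$-class groups coincide, and $X_{\infty}(F) \cong X_{\infty}(K) \cong \Z/2^m\Z$ with $\lambda_2 = 0$. The step I expect to be most delicate is the passage from $\#A_1 = \#A_0$ to global stability: it hinges on verifying the single-totally-ramified-prime hypothesis so that Fukuda's criterion is valid already at $n=0$, and on reconciling the abelian-group structure of $A_0$ (controlled by the $4$-rank) with the $\Lambda$-cyclicity of $X_{\infty}$ in order to read off the exact shape $\Z/2^m\Z$ and the bound $m \ge 2$.
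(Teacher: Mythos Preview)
Your proof is correct and follows the same route as the paper: establish that $d\equiv 5\pmod 8$ makes $2$ inert in $K$ so that Fukuda's criterion applies already at $n=0$, feed in Theorem~\ref{(5,3,3,1)} to get $\#A_1=\#A_0$ when $\mathfrak p_1$ is non-principal, exclude the sign $a^2-b^2d=-4p_1$ via reduction modulo $q_1$ (the paper does this directly rather than first dividing out $p_1$, but the content is identical), and observe $F_n=K_n$ for all $n\ge 1$.

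The only cosmetic difference is in how you pin down $A_0\cong\Z/2^m\Z$ with $m\ge 2$: the paper reads both facts straight off Lemma~\ref{A_0 = 2} (the genus field $K_G=K(\sqrt{p_1})$ forces $\mathrm{rank}\,A_0=1$, and the ``only if'' direction gives $\#A_0\neq 2$ under $(q_1/p_1)=(q_2/p_1)=1$), whereas you invoke a R\'edei $4$-rank computation together with the Mouhib--Movahhedi result. One terminological caution: what Mouhib--Movahhedi actually prove is that each $A_n$ is cyclic as an abelian group, which is strictly stronger than $\Lambda$-cyclicity of $X_\infty$ and is precisely what your argument needs; a finite $\Lambda$-cyclic module need not be $\Z_2$-cyclic (consider $\Lambda/(2,T^2)\cong(\Z/2\Z)^2$), so you should cite the abelian-group cyclicity rather than $\Lambda$-cyclicity.
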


\begin{theorem}\label{(5,7,3)}
Let $K = \Q(\sqrt{p_1q_1q_2})$ be a real quadratic number field such that $p_1 \equiv 5 \pmod 8$, $q_1 \equiv 7 \pmod 8$, $q_2 \equiv 3 \pmod 8$, and $\left(\dfrac{q_1}{p_1}\right) = -1$.  Then, $A_n \cong \Z/2\Z$ for all $n\geq 1$, and the Iwasawa module $X_{\infty}$ corresponding to the $\Z_2$-extension of $K$ is isomorphic to $\Z/2\Z$. 
\end{theorem}
\noindent It also follows from Theorem \ref{(5,7,3)} that the $\lambda$-invariant associated with the $Z_2$-extension of $K$ as well as $F=\Q(\sqrt{2p_1 q_1 q_2})$ vanishes, as proven earlier in \cite{mouhib-mova}.  

\section{Preliminaries}
For a number field $K$, the $2$-part of the class group of $K$ can be effectively studied using genus theory. 
For a quadratic extension $K/k$ of number fields, the genus formula connects the order of a subgroup of the $2$-class group $A(K)$ with the order of $A(k)$. Other entities that appear in this formula are the number of places ramified in $K/k$ and the index of the norm of units of $K$ in the group of units in $k$. 
We first state the formula. 
\begin{theorem}(Genus Formula)(\cite{chevalley}, \cite[Theorem 2.5]{mizu_thesis})\label{genusfor}
Let $K/k$ be a quadratic extension of number fields with Galois group $G = {\rm{Gal}} \left(K/k\right)$. Let $A(K)^{G}$ be the subgroup of $A(K)$ consisting of the ideal classes that are fixed by the action of $G$ on $A(K)$. Let $N_{K/k}$ denote the norm map from $K$ to $k$. Let $E(K)$ and $E(k)$ be the unit groups of $K$ and $k$, respectively. If $t$ is the number of places of $k$ ramified in $K$, then 
\begin{align*}
\#A(K)^{G} &= \#A(k) \times \dfrac{2^{t-1}}{\left[ E(k):E(k)\cap N_{K/k}K^{\times} \right]}.
\end{align*}
\end{theorem}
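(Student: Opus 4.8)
The plan is to establish both formulas at once through the Galois cohomology of $G=\mathrm{Gal}(K/k)=\langle\sigma\rangle\cong\Z/2\Z$, exploiting that a cyclic group has $2$-periodic Tate cohomology $\widehat{H}^i(G,M)$, with $\widehat{H}^0(G,M)=M^G/N_{K/k}M$ and $\widehat{H}^{-1}(G,M)=\ker(N_{K/k})/(\sigma-1)M$. Since $G$ has order $2$, taking $G$-invariants is compatible with passage to $2$-primary parts and every correction factor below is a power of $2$, so it suffices to prove the identities for the full class group and then read off the $2$-part; I accordingly write $A(K)=I(K)/P(K)$, where $I(K)$ is the group of fractional ideals of $K$ and $P(K)$ its subgroup of principal ideals. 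The two exact sequences of $G$-modules that drive everything are the unit sequence $1\to E(K)\to K^{\times}\to P(K)\to 1$ and the ideal sequence $1\to P(K)\to I(K)\to A(K)\to 1$.

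First I would extract $B(K)^G$ from the ideal sequence. The module $I(K)$ is free abelian on the primes of $K$, which $G$ permutes, so it is a permutation module and Shapiro's lemma gives $H^1(G,I(K))=0$. The long exact cohomology sequence of the ideal sequence then collapses to
\[
1\longrightarrow I(K)^G/P(K)^G \longrightarrow A(K)^G \longrightarrow H^1(G,P(K))\longrightarrow 1,
\]
and the left-hand term is exactly the subgroup of classes represented by a $G$-fixed (ambiguous) ideal, i.e. $B(K)^G=I(K)^G/P(K)^G$. This already yields the multiplicative relation $\#A(K)^G=\#B(K)^G\cdot\#H^1(G,P(K))$, so the two displayed formulas differ only by the factor $\#H^1(G,P(K))$; it remains (i) to count $\#B(K)^G$ and (ii) to compute $H^1(G,P(K))$.

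For (i) I would compare $B(K)^G$ with $A(k)$ through the extension-of-ideals homomorphism $j\colon A(k)\to B(K)^G$. Its cokernel is $I(K)^G/\big(I(k)\cdot P(K)^G\big)$; writing each ambiguous ideal in terms of the primes above ramified and unramified primes of $k$ shows $I(K)^G/I(k)\cong(\Z/2\Z)^{t_f}$, where $t_f$ is the number of ramified finite primes. Its kernel consists of ideals $\mathfrak{a}$ of $k$ that capitulate in $K$, modulo $P(k)$: if $\mathfrak{a}O_K=(\alpha)$ then $\sigma\alpha/\alpha$ is a unit of norm $1$, so Hilbert 90 identifies this kernel with data measured by $\widehat{H}^{-1}(G,E(K))$ and hence by the index of units. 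Assembling the kernel–cokernel count, together with the global product formula and the Hasse norm theorem (which is what trims one factor of $2$, producing the exponent $t-1$ rather than $t$ and folding in the ramified archimedean places through the signs of units), yields $\#B(K)^G=\#A(k)\cdot 2^{\,t-1}/[E(k):N_{K/k}(E(K))]$, the second formula. For (ii) I would feed the unit sequence into cohomology: Hilbert 90 gives $H^1(G,K^{\times})=0$, so the connecting map embeds $H^1(G,P(K))$ into $\widehat{H}^0(G,E(K))=E(k)/N_{K/k}(E(K))$, and a short chase identifies $H^1(G,P(K))\cong\big(E(k)\cap N_{K/k}K^{\times}\big)/N_{K/k}(E(K))$. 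Multiplying $\#B(K)^G$ by this index converts the denominator $[E(k):N_{K/k}(E(K))]$ into $[E(k):E(k)\cap N_{K/k}K^{\times}]$, giving the first formula.

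I expect the main obstacle to be the precise archimedean and global bookkeeping in step (i): showing that the factor is $2^{\,t-1}$ with $t$ counting \emph{all} ramified places (so that ramified real places are included) and that exactly one factor of $2$ is removed. This is governed by the local–global comparison between $E(k)/(E(k)\cap N_{K/k}K^{\times})$ and the local norm conditions $\prod_v\widehat{H}^0(G_v,\cdot)$ at the ramified places, i.e. by the Hasse norm theorem and the product formula, and keeping the sign contributions at the real places consistent with the unit index is the delicate point.
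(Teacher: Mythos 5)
The paper itself does not prove Theorem \ref{genusfor}: it is quoted verbatim from Chevalley and from Mizusawa's thesis, so your attempt can only be measured against the classical argument. Your skeleton matches that argument exactly, and the reductions you make are all correct: $H^1(G,I(K))=0$ because $I(K)$ is a permutation module; $B(K)^G=I(K)^G/P(K)^G$ via the long exact sequence, giving $\#A(K)^G=\#B(K)^G\cdot\#H^1(G,P(K))$; and Hilbert 90 applied to the unit sequence identifying $H^1(G,P(K))\cong\bigl(E(k)\cap N_{K/k}K^{\times}\bigr)/N_{K/k}(E(K))$, which is precisely the factor by which the two displayed formulas differ. Your reduction to the full class group and back to the $2$-part is also legitimate since all Tate cohomology of $G\cong\Z/2\Z$ is $2$-torsion.

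The genuine gap is in step (i), and you misattribute the missing ingredient. Carrying out the kernel--cokernel count carefully (the cokernel of $j$ is not $(\Z/2\Z)^{t_f}$ on the nose but that group modulo the image of $P(K)^G$, while the capitulation kernel embeds in $H^1(G,E(K))$; these two corrections cancel against each other) yields the clean intermediate identity $\#B(K)^G=\#A(k)\cdot 2^{t_f}/\#H^1(G,E(K))$, where $t_f$ counts only the \emph{finite} ramified places. To convert this into $\#A(k)\cdot 2^{t-1}/[E(k):N_{K/k}(E(K))]$ with $t$ counting all ramified places, one needs $\#H^1(G,E(K))=2^{\,1-t_\infty}\cdot[E(k):N_{K/k}(E(K))]$, i.e.\ the Herbrand quotient computation $q(E(K))=\#\widehat{H}^0(G,E(K))/\#H^1(G,E(K))=2^{t_\infty}/2$, with $t_\infty$ the number of ramified real places. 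This is Herbrand's unit theorem, a consequence of Dirichlet's unit theorem applied to the $\Q[G]$-module structure of the unit lattice; it is exactly where the exponent $t-1$ and the archimedean contributions come from. The Hasse norm theorem and the product formula, which you invoke at this point, cannot supply it: they constrain $E(k)\cap N_{K/k}K^{\times}$ inside the local norm groups at ramified places, i.e.\ they live on the $\widehat{H}^0$ side, whereas the needed quantity $H^1(G,E(K))=\widehat{H}^{-1}(G,E(K))$ lives on the kernel-of-norm side. (If you insist on a Hasse-norm/product-formula route, the natural order is the reverse of yours: establish the $A(K)^G$ formula first idelically, where the single trimmed factor of $2$ is the global norm index $[C_k:N_{K/k}C_K]=2$, and then \emph{divide} by $\#H^1(G,P(K))$ to obtain the $B(K)^G$ formula.) So the architecture is right and you correctly flagged where the difficulty sits, but the decisive count is absent and the tools you name would not close it.
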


Let $p$ be any prime number and $A(F)$ be the $p$-sylow subgroup of the class group of a number field $F$. By $\text{rank}_p A(F)$ or $p$-rank of $A(F)$, we mean the dimension of $A(F)/pA(F)$ as a vector space over the field $\Z/p\Z$. The following proposition enables us to examine the $2$-rank of a quadratic extension of $\Q$, and we outline its proof though it is well-known in the literature.
\begin{propn}\label{rmk to genus}
If $K/k$ is a quadratic extension of number fields and the image of the lifting map $j: A(k) \rightarrow A(K)$ is trivial, then the non-trivial element of $G={\rm{Gal}}(K/k)$ acts as $-1$ on $A(K)$. In that case, $A(K)^{G}$ is the subgroup of elements of order $2$. Consequently, $$\# A(K)^{G} = \# \left( A(K)/2A(K)\right) = 2^{\ {\rm{rank}_2}A(K)}.$$
\end{propn}
\begin{proof}
    Let $\sigma$ be the generator of ${\rm{Gal}}(K/k)$, and $[\mathfrak{P}]$ be an ideal class in $A(K)$ for a prime ideal $\mathfrak{P}$ of $K$. Let $\p$ be prime in $k$ lying below $\mathfrak{P}$. Let $f$ be the residue degree of $\mathfrak{P}$ in $K/k$. Since the lifting map is trivial, we have 
    $$\sigma[\mathfrak{P}]\cdot[\mathfrak{P}] = j([\p^f])=id.$$ Hence, $\sigma$ acts as $-1$ on $A(K)$. Therefore, $A(K)^G = A(K)[2]$ and the result follows.
\end{proof}

The class number of $\Q$ is equal to $1$, and its $2$-class group is trivial. For a quadratic extension $K=\Q(\sqrt{d})$, Proposition \ref{rmk to genus} implies that  
\begin{equation*}
2^{\ {\rm{rank}_2}A(K)} = \dfrac{ 2^{t-1}}{\left[E(\Q):E(\Q)\cap N_{K/\Q}(K^{\times})\right]},
\end{equation*}
where $t$ is the number of rational primes ramified in $K/\Q$. The group $A(K)$ is cyclic if and only if its $2$-rank is equal to $1$. Since $E(\Q) = \{-1, 1\}$, the index in the denominator of the formula is either $1$ or $2$. In such a situation, we have $2^{t-1} = 2$ or $4$ and $t=2$ or $3$. Here, we emphasise that our cases of interest require $t=3$. 

\begin{rmk}\label{A_n is cyclic}
From a result proved by Mouhib and Movahhedi (cf. \cite[Theorem 3.8, part $iv$]{mouhib-mova}), we infer that $A(K_n)$ is cyclic for all $n \geq 0$, where $K$ satisfies condition (\ref{cond1}) or (\ref{cond2}) of Section 1.
\end{rmk}

\smallskip

In order to find the structure of the Iwasawa module $X_\infty$, we appeal to a result of Fukuda on the stability of rank and order of $p$-class groups in a $\Z_p$-extension of any number field $L$, where $p$ is any prime number.

\begin{theorem}\cite[Theorem 1]{fukuda}\label{fukuda's result}
Let $p$ be a prime number. Let $L$ be a number field and let $L_{\infty}/L$ be a $\Z_p$-extension of $L$.  Let $A(L_n)$ denote the $p$-Sylow subgroup of the $n$-th layer $L_n$ in the extension $L_\infty/L$. Let $n_0 \geq 0$ be an integer such that any prime of $L_{\infty}$ which is ramified in $L_{\infty}/L$ is totally ramified in $L_{\infty}/L_{n_0}$. Then the following hold. 
\begin{enumerate}
\item If there exists an integer $n \geq n_0$ such that $\#A(L_{n+1}) = \#A(L_n)$, then $\#A(L_m) = \#A(L_n)$ for all $m \geq n$. In particular, both the Iwasawa invariants $\mu(L_\infty/L)$ and $\lambda(L_\infty/L)$ vanish. 
  
  \smallskip
  
\item If there exists an integer $n \geq n_0$ such that ${\rm{rank}}_{p}A(L_{n+1}) = {\rm{rank}}_{p}A(L_{n})$, then ${\rm{rank}}_{p}A(L_{m}) = {\rm{rank}}_{p}A(L_{n})$ for all $m \geq n$. In particular, the Iwasawa invariant $\mu(L_\infty/L)$ vanishes.
\end{enumerate}
\end{theorem}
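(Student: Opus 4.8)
The plan is to deduce both parts from the standard description of the layers $A_n$ as quotients of a single Iwasawa module, the total ramification hypothesis entering at exactly one point. First I would reduce to the case $n_0=0$: replacing $F$ by $F_{n_0}$ and reindexing, I may assume every prime ramified in $F_\infty/F$ is totally ramified in $F_\infty/F$. Write $\Gamma=\mathrm{Gal}(F_\infty/F)$, fix a topological generator $\gamma_0$, set $\Lambda=\Z_p\llbracket T\rrbracket$ with $T=\gamma_0-1$, and put $\omega_n=(1+T)^{p^n}-1$ and $\nu_n=\omega_n/\omega_0=\omega_n/T$. Let $X=\varprojlim A_n$ be the inverse limit under the norm maps, a finitely generated $\Lambda$-module, which I regard as $\mathrm{Gal}(L_\infty/F_\infty)$ for $L_\infty$ the maximal unramified abelian pro-$p$ extension of $F_\infty$.

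The key structural step is to establish the isomorphism
\[
A_n\ \cong\ X/\nu_n Y \qquad (n\ge 0)
\]
for a fixed $\Lambda$-submodule $Y\subseteq X$ with $X/Y\cong A_0$. I would prove this as in Iwasawa's classical analysis (cf. \cite{washington_book}, Chapter 13): the group $\mathcal{X}=\mathrm{Gal}(L_\infty/F)$ sits in $1\to X\to\mathcal{X}\to\Gamma\to 1$, and for each ramified prime $\mathfrak{p}_i$ the total ramification of $F_\infty/F$ forces the inertia group (of a chosen prime of $L_\infty$ above it) to be a complement to $X$ mapping isomorphically onto $\Gamma$, with generator $\sigma_i$. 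Writing $\sigma_i=\sigma_1 a_i$ with $a_i\in X$ and identifying the $p$-Hilbert class field of $F_n$ with the maximal subextension of $L_\infty$ abelian and unramified over $F_n$, class field theory gives $A_n\cong X/(\omega_n X+\nu_n\sum_{i\ge 2}\Lambda a_i)$; since $\nu_n\omega_0=\omega_n$ this is exactly $X/\nu_n Y$ with $Y=\omega_0 X+\sum_{i\ge 2}\Lambda a_i$ (note $Y\supseteq TX$ absorbs $T a_i$, so $Y$ is genuinely a $\Lambda$-submodule). This is the one place where the ramification assumption is essential, and verifying it cleanly is the main obstacle to a self-contained account.

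Granting the formula, set $B_n=\nu_n Y$, a descending chain with $B_{n+1}=P_n B_n$, where $P_n=\omega_{n+1}/\omega_n$ has constant term $P_n(0)=p$ and therefore lies in the maximal ideal $\mathfrak{m}=(p,T)$ of $\Lambda$. Since $B_{n+1}\subseteq B_n$, there is a canonical surjection $A_{n+1}=X/B_{n+1}\twoheadrightarrow X/B_n=A_n$ with kernel $B_n/B_{n+1}$. For part (1), the hypothesis $\#A_{n+1}=\#A_n$ forces $B_{n+1}=B_n$, i.e. $P_n B_n=B_n$; as $P_n\in\mathfrak{m}$ this yields $\mathfrak{m}B_n=B_n$, whence $B_n=0$ by Nakayama's lemma (applicable since $B_n$ is finitely generated over the Noetherian local ring $\Lambda$). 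Then $B_m=(\omega_m/\omega_n)B_n=0$ for all $m\ge n$, so $A_m\cong X\cong A_n$; in particular $X$ is finite and $\mu=\lambda=0$.

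For part (2) I would run the identical argument modulo $p$, over the discrete valuation ring $\overline{\Lambda}=\Lambda/p\Lambda\cong\mathbb{F}_p\llbracket T\rrbracket$ with uniformizer $T$. Since $\omega_n\equiv T^{p^n}\pmod p$, one gets $\mathrm{rank}_p A_n=\dim_{\mathbb{F}_p}\overline{X}/T^{p^n-1}\overline{Y}$, where $\overline{X}=X/pX$ and $\overline{Y}$ is the image of $Y$. Writing $\overline{B}_n=T^{p^n-1}\overline{Y}$ gives $\overline{B}_{n+1}=T^{(p-1)p^n}\overline{B}_n$ with the transition multiplier again in the maximal ideal $(T)$, so $\mathrm{rank}_p A_{n+1}=\mathrm{rank}_p A_n$ forces $\overline{B}_{n+1}=\overline{B}_n$ and hence $\overline{B}_n=0$ by Nakayama over $\overline{\Lambda}$; thus $\overline{X}$ is finite, the ranks are constant for all $m\ge n$, and $\mu=0$. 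The two parts are therefore a single Nakayama argument carried out over $\Lambda$ and over $\overline{\Lambda}$ respectively, the decisive feature in each being that the transition multiplier $\omega_{n+1}/\omega_n$ lies in the maximal ideal.
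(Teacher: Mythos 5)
Your proposal is correct and matches the approach of the source: the paper itself imports this theorem from Fukuda \cite{fukuda} without proof, and Fukuda's original argument rests on precisely the description $A_n\cong X/\nu_nY$ (as in \cite[Chapter 13]{washington_book}) followed by the Nakayama step you carry out over $\Lambda$ and over $\Lambda/p\Lambda$. The only point worth making explicit in your write-up is that, after replacing $F$ by $F_{n_0}$, at least one prime is totally ramified in $F_{\infty}/F$ --- automatic for a $\Z_p$-extension since some prime above $p$ must ramify --- as this is needed for the inertia-group construction of the submodule $Y$.
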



The group of units in the ring of integers of extension of a number field is a crucial ingredient in the study of class groups. In particular, we are going to use Kuroda-Kubota's class number formula stated below. 
\begin{theorem}(\cite{kubota}, \cite{kuroda}, cf. \cite{ben-san-sny})\label{kubota}
Let $L/\mathbb{Q}$ be a totally real bi-quadratic extension, with unit group $E(L)$. Let $L_1, L_2$ and $L_3$ be the quadratic subfields of $L$. Let $\eps_i$ be the fundamental unit of $L_i$, for $i=1,2$ and $3$. Let $Q(L) := [E(L) : \langle -1, \eps_{1}, \eps_{2}, \eps_{3} \rangle]$ be the Hasse unit index of $L$. Then we have
\begin{equation}
\#A(L) = \dfrac{1}{4}\cdot Q(L)\cdot \#A(L_1)\cdot \#A(L_2)\cdot \#A(L_3).
\end{equation}
Further, the following are the possible systems of fundamental units of $L$ under some numbering of the fields $L_i$.
\begin{multicols}{2}
\begin{enumerate}
    \item $\{\eps_1, \eps_2, \eps_3 \}$
    \item $\{\sqrt{\eps_1}, \eps_2, \eps_3 \}$
    \item $\{ \sqrt{\eps_1}, \sqrt{\eps_2}, \eps_3\}$
    \item $\{\sqrt{\eps_1\eps_2}, \eps_2, \eps_3 \}$
    \item $\{\sqrt{\eps_1\eps_2}, \eps_2, \sqrt{\eps_3} \}$
    \item $\{ \sqrt{\eps_1\eps_2}, \sqrt{\eps_1\eps_3}, \sqrt{\eps_2\eps_3}\}$
    \item  $\{ \sqrt{\eps_1\eps_2\eps_3}, \eps_2, \eps_3 \}$
\end{enumerate}
\end{multicols}
\noindent Here, any $\eps_i$ ($i =1,2,3$) that appears under the square-root is assumed to have norm equal to $1$, except for the $7$th case, where all of $\eps_i$ ($i =1,2,3$) must have the same norm, either all $1$, or all $-1$.
\end{theorem}

The existence of infinitely many real quadratic fields of the form (\ref{cond1}) and (\ref{cond2}) follows easily from Dirichlet's theorem on primes in arithmetic progression using the Chinese remainder theorem. In particular, the following proposition ensures validity of Greenberg's conjecture for infinitely many real quadratic fields arising out of  Corollary \ref{lambda of 5,3,3,-1}.

\begin{propn}(cf. Proposition 2.2, \cite{CLS_2 class group}) \label{infinite primes}
Let $t \geq 1$ be an integer. Assume that for each $i \in \{1,\ldots,t\}$, we are given integers $a_i \in \{1,3,5,7 \}$, and for each $1 \leq j < k \leq t$, the integers $\eps_{kj} \in \{ \pm 1 \}$ are specified. Then there exist infinitely many $t$-tuples $\{ p_1,\ldots,p_t \}$ of prime numbers such that $p_i \equiv a_i \pmod {8}$ and the Legendre symbol $\left( \dfrac{p_k}{p_j} \right)$ equals $\eps_{kj}$.
\end{propn}

\section{The $2$-class group of $K=\Q(\sqrt{p_1q_1q_2})$}


For any number field $K$, the group $A(K)$ is isomorphic to the Galois group of $L(K)/K$, where $L(K)$ is the $2$-Hilbert class field of $K$. One of the important subfields of $L(K)$ containing $K$ is the genus field $K_G$ of $K$. It is defined as the maximal abelian extension of $\Q$ contained in $L(K)$. Moreover, the Galois group of $K_G/K$ is isomorphic to the $2$-torsion of $A(K)$ and the $2$-rank of Gal($K_G/K$) is equal to the $2$-rank of $A(K)$. The genus field $K_G$ can be obtained from the narrow genus field $K_G^{+}$. The narrow genus field $K_G^{+}$ is the maximal abelian extension of $\Q$ contained in the narrow $2$-Hilbert class field of $K$. If $K$ is an imaginary quadratic field, we have $K_G = K_G^{+}$. If $K$ is a real quadrtaic field, $K_G$ is the maximal real subfield of $K_G^{+}$. The narrow genus field of a quadratic number field $K$ can be explicitly determined from the prime factorization of the discriminant $D_{K}$. We can express the prime factorization of $D_{K}$ by $D_{K} = \pm 2^{e}p_1^{\ast}\cdots p_t^{\ast}$, where $e = 0,2$ or $3$, and
\begin{equation*}
  p_i^{\ast} = 
      \begin{cases}
        p_i & \text{if} \  p_i \equiv 1 \pmod 4\\
        -p_i & \text{if}\ p_i \equiv 3 \pmod 4.
      \end{cases}     
\end{equation*}
In this notation, we have $K_G^{+} = \mathbb{Q}( \sqrt{d}, \sqrt{p_1^{\ast}}, \cdots, \sqrt{p_t^{\ast}})$. 

We now prove a lemma concerning the order of $A(K)$ for $K = \Q(\sqrt{p_1q_2q_2})$, where the primes $p_1$, $q_1$ and $q_3$ satisfy $p_1 \equiv 1 \Mod{4}$ and  $q_1, \;q_2 \equiv 3 \Mod{4}$.

\begin{lemma}\label{A_0 = 2}
Let $K = \Q(\sqrt{p_1q_1q_2})$ such that $p_1 \equiv 1 \Mod{4}$ and $q_1,\; q_2 \equiv 3 \Mod{4}$. Then, $\#A(K) = 2$ if and only if $-1 \in \left\lbrace \left(\dfrac{q_1}{p_1}\right), \left(\dfrac{q_2}{p_1}\right) \right\rbrace $.
\end{lemma}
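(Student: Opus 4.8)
The plan is to first show that $A(K)$ is always cyclic, so that the lemma reduces to deciding whether $\#A(K)=2$ or $\#A(K)\ge 4$, and then to detect the latter via the existence of an unramified cyclic quartic extension of $K$.

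Since $p_1\equiv 1\Mod 4$ and $q_1,q_2\equiv 3\Mod 4$, we have $d:=p_1q_1q_2\equiv 1\Mod 4$, so the discriminant of $K$ is $d$ and exactly $t=3$ rational primes (namely $p_1,q_1,q_2$) ramify in $K/\Q$. I would feed $t=3$ into the rank formula for quadratic fields displayed after Remark \ref{rmk to genus}, namely $2^{\operatorname{rank}A(K)}=2^{t-1}/[E(\Q):E(\Q)\cap N_{K/\Q}(K^{\times})]$, and compute the denominator. As $E(\Q)=\{\pm1\}$, the index is $2$ precisely when $-1\notin N_{K/\Q}(K^{\times})$. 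By the Hasse norm theorem $-1$ is a global norm iff it is a local norm everywhere, and the Hilbert symbol $(-1,d)_{q_1}=\left(\dfrac{-1}{q_1}\right)=-1$ because $q_1\equiv 3\Mod 4$; hence $-1\notin N_{K/\Q}(K^{\times})$, the index equals $2$, and $\operatorname{rank}A(K)=1$ for every field of this shape. Thus $A(K)$ is cyclic of $2$-power order, and it remains only to separate order $2$ from order $\ge 4$.

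Next I would pin down the unique unramified quadratic extension of $K$. Using the prescription $K_G^{+}=\Q(\sqrt{d},\sqrt{p_1^{\ast}},\sqrt{q_1^{\ast}},\sqrt{q_2^{\ast}})$ with $p_1^{\ast}=p_1$ and $q_i^{\ast}=-q_i$, one gets $K_G^{+}=\Q(\sqrt{p_1},\sqrt{-q_1},\sqrt{-q_2})$, whose maximal real subfield is the genus field $K_G=\Q(\sqrt{p_1},\sqrt{q_1q_2})$. Since $\operatorname{rank}A(K)=1$, the extension $K_G/K$ is the unique unramified quadratic extension of $K$, corresponding to $A(K)/A(K)^2$. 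Because $A(K)$ is cyclic, $\#A(K)=2$ is equivalent to $L(K)=K_G$, i.e. to the nonexistence of an unramified cyclic quartic extension $N/K$ whose unique quadratic subfield is $K_G$.

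Finally I would apply the Rédei--Reichardt criterion to the decomposition $d=d_1d_2$ with $d_1=p_1$ and $d_2=q_1q_2$ (both positive fundamental discriminants). Such an unramified cyclic quartic $N/K$ exists iff $d_1$ is a quadratic residue modulo every prime dividing $d_2$ and $d_2$ is a quadratic residue modulo every prime dividing $d_1$, that is, iff $\left(\dfrac{p_1}{q_1}\right)=\left(\dfrac{p_1}{q_2}\right)=1$ (the companion condition $\left(\dfrac{q_1q_2}{p_1}\right)=1$ then follows). As $p_1\equiv 1\Mod 4$, quadratic reciprocity gives $\left(\dfrac{p_1}{q_i}\right)=\left(\dfrac{q_i}{p_1}\right)$, so the quartic exists iff $\left(\dfrac{q_1}{p_1}\right)=\left(\dfrac{q_2}{p_1}\right)=1$. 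Equivalently, $\#A(K)\ge 4$ iff both symbols equal $1$, and the negation of this is exactly $-1\in\left\{\left(\dfrac{q_1}{p_1}\right),\left(\dfrac{q_2}{p_1}\right)\right\}$, which yields the lemma. I expect the Rédei--Reichardt step to be the main obstacle: one must justify the existence criterion carefully for the ordinary (rather than narrow) class group, control the behaviour at the prime $2$ and at the archimedean place in Legendre's solvability condition for $p_1x^2+q_1q_2y^2=z^2$, and in particular verify that the two symbols $\left(\dfrac{p_1}{q_1}\right),\left(\dfrac{p_1}{q_2}\right)$ must each equal $1$ rather than only their product---this is what makes the case of two $-1$'s, where the product is $+1$, still give $\#A(K)=2$.
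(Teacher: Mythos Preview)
Your argument is correct and takes a genuinely different route from the paper's proof. Both begin by showing $A(K)$ is cyclic---the paper does this by computing the genus field $K_G=K(\sqrt{p_1})$ directly and reading off $[K_G:K]=2$, while you additionally verify the norm index via a Hilbert-symbol computation---and both identify $K_G$ as the unique unramified quadratic extension of $K$.

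The divergence is in separating $\#A(K)=2$ from $\#A(K)\ge 4$. The paper argues each direction by hand: assuming $\bigl(\tfrac{q_1}{p_1}\bigr)=-1$, it shows $[\q_1]$ has order $2$ and lies outside $A(K)^2$ (via the Artin symbol for $K_G/K$), then applies Nakayama's lemma to $A(K)=\{1,[\q_1]\}\cdot A(K)^2$ to force $\#A(K)=2$; for the converse it assumes both symbols equal $1$, so that $\p_1,\q_1,\q_2$ all split in $K_G=L(K)$ and hence are principal, and derives a contradiction by comparing the quadratic extensions $K(\sqrt{\eps_1})$ obtained from the three principal relations. Your approach instead packages both directions into a single $4$-rank computation via R\'edei--Reichardt, checking that among the admissible decompositions of $D_K$ only $(p_1,q_1q_2)$ can lie in $S_2(K)$, and that it does so exactly when both symbols equal $+1$.

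The paper's method produces an explicit generator of $A(K)$ and avoids invoking Theorem~\ref{RR} at that stage, while your method is more uniform and reuses the machinery the paper sets up anyway. Your flagged concern about narrow versus ordinary class groups is easily handled with the paper's own tools: since $q_1\equiv 3\Mod 4$ divides $d$, Proposition~\ref{A(K) and A+(K) are 2 elmtry} gives that $A(K)$ is $2$-elementary iff $A^{+}(K)$ is, so one may compute $\#S_2(K)$ for the narrow group directly. A short check then disposes of the two negative decompositions $(-q_1,-p_1q_2)$ and $(-q_2,-p_1q_1)$: the conditions they impose would force $\bigl(\tfrac{q_1}{q_2}\bigr)=\bigl(\tfrac{q_2}{q_1}\bigr)=-1$, contradicting quadratic reciprocity for $q_1,q_2\equiv 3\Mod 4$. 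Hence $\#S_2(K)>1$ iff $(p_1,q_1q_2)\in S_2(K)$, which is precisely your conclusion and also resolves your worry about ``each equal $1$ rather than only their product''.
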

\begin{proof}
Consider the field $K = \Q(\sqrt{p_1q_1q_2})$. From the congruence modulo 4 conditions on the prime factors of the discriminant of $K$, the narrow genus field $K_G^{+}$ of $K$ turns out to be $\Q( \sqrt{p_1q_1q_2}, \sqrt{p_1}, \sqrt{-q_1}, \sqrt{-q_2})$. As $K$ is real, the genus field $K_G$ of $K$ is equal to $\Q( \sqrt{p_1q_1q_2}, \sqrt{p_1}, \sqrt{q_1q_2}) = Q(\sqrt{p_1q_1q_2}, \sqrt{p_1})$. We note that $K_G$ is a quadratic extension of $K$. Since ${\rm{rank}}_{2}$Gal($K_G/K$) =  ${\rm{rank}}_{2}A(K)$, we deduce that ${\rm{rank}}_{2}A(K) = 1$, and hence, $A(K)$ is cyclic. 

\smallskip

We first prove the forward part of the result, assuming that $\left(\dfrac{q_1}{p_1}\right) = -1$. A similar proof holds true if the other Legendre symbol is (or both the symbols are) equal to $-1$. Since $\left( \dfrac{q_1}{p_1} \right) = -1$, $q_1$ is inert in the extension $\Q(\sqrt{p_1})/\Q$. It follows that the prime $\mathfrak{q}_1$ in $K$ which lies above $q_1$ is inert in $K_G/K$. Thus,  $\mathfrak{q}_1$ is not totally split in $L(K)/K$, where $L(K)$ is the 2-Hilbert class field of $K$. By class field theory, $\mathfrak{q}_1$ is a non-principal ideal in $K$ and thus, $[\mathfrak{q}_1]$ must be of order $2$. Let $\phi : A(K) \rightarrow \rm{Gal}$($K_G/K$) be the Artin map. Since $\mathfrak{q}_1$ does not split in $K_G$, the Artin symbol $\left(\dfrac{K_G/K}{\mathfrak{q}_1}\right)$ must be non-trivial. Therefore, $[\mathfrak{q}_1]$ does not belong to Ker($\phi$). Let $G$ be the group Gal($K/\Q$), and $\sigma$ be its generator. Since $h(\Q)=1$, $\sigma$ acts as $-1$ on $A(K)$, which produces the relation $A(K)^{\sigma -1} = A(K)^2$. Besides, $A(K)^G = A(K)[2] = \{ id , [ \mathfrak{q}_1] \}$ as $A(K)$ is cyclic and $\sigma$ acts as -1 on $A(K)$. By Artin map, $A(K)/A(K)^{\sigma -1}$ is isomorphic to Gal($K_G/K$). Therefore, $[ \mathfrak{q}_1] \not\in A(K)^{\sigma -1} = A(K)^2$. Thus, $A(K) = A(K)^2 \cup [\mathfrak{q}_1]A(K)^2 = \{ id , [ \mathfrak{q}_1] \}A(K)^2$. Hence, by Nakayama's lemma, $A(K) = \{ id , [ \mathfrak{q}_1] \}$, which is of order $2$.

\smallskip

Conversely, suppose $\#A(K) = 2$, but $ \left(\dfrac{q_1}{p_1}\right) =1$ and $\left(\dfrac{q_2}{p_1}\right) =1$. From the order of $A(K)$, we gather that $L(K) = K(G) = \Q(\sqrt{p_1q_1q_2}, \sqrt{p_1})$. If both the Legendre symbols are equal to 1, then the primes $\mathfrak{p}_1, \mathfrak{q}_1$ and $\mathfrak{q}_2$ which lie above $p_1, q_1$ and $q_2$ respectively in $K$ are totally split in the extension $K_G/K = L(K)/K$. Thus, all three prime ideals must be principal in $K$, and $[\mathfrak{p}_1] = [\mathfrak{q}_1] = [\mathfrak{q}_2]$ in $A(K)$. Therefore, there exists $\alpha \in K^{\times}$ such that $\mathfrak{q}_1 = \langle \alpha \rangle \mathfrak{q}_2$. Squaring both sides and using the fact that the generators of a principal ideal differ by a factor of a unit, we obtain that $q_1 = \alpha^2q_2\eps^{n}$, where $\eps$ is the fundamental unit of $K$ and $n \in \Z$. If $n$ is even, then this implies that $\sqrt{\frac{q_1}{q_2}} \in K$, which is a contradiction. Therefore, $n$ must be odd. In that case, $K(\sqrt{\eps}) = K(\sqrt{\frac{q_1}{q_2}}) = K(\sqrt{p_1})$. Since $ [\mathfrak{p}_1] = [\mathfrak{q}_1]$, following the same argument, we get $K(\sqrt{\eps}) = K(\sqrt{\frac{q_1}{p_1}}) = K(\sqrt{q_2}) \neq K(\sqrt{p_1})$. That way, we again arrive at a contradiction. Hence, at least one of $\left(\dfrac{q_1}{p_1}\right)$ and $\left(\dfrac{q_2}{p_1}\right)$ must be equal to -1. 
\end{proof}

\section{The $2$-class group of $\Q(\sqrt{2p_1q_1q_2})$}
For a real quadratic field $K =\Q(\sqrt{p_1q_1q_2})$ where the three prime factors satisfy conditions (\ref{cond1}) or (\ref{cond2}) of Section 1, we shall use $F$ to denote the field $\Q(\sqrt{2p_1q_1q_2})$. 
The orders of $A_0$ and $A(F)$ can help in estimating the order of $A_1$, as we shall see more generally in  Lemma \ref{bound of A_{n+1}}. In this section, we examine the structure of $A(F)$. 
The discriminant $D_F$ is equal to $8p_1q_1q_2$, and has two prime factors that are congruent to $3$ modulo $4$. Thus, the genus field $F_G$ of $F$ is equal to $\Q(\sqrt{2}, \sqrt{p_1},\sqrt{q_1q_2})$. We note that Gal($F_G/F$) is isomorphic to $\Z/2\Z \oplus \Z/2\Z$. Hence, the $2$-rank of $A(F)$ is equal to $2$. In order to compute the order of $A(F)$ under certain Legendre symbol criteria on the prime factors, we shall use a result of R\'{e}idei and Reichardt (cf. \cite{RR-theorem}). Their result allows us to calculate the $2$-rank and the $4$-rank of the narrow 2-class groups, which in turn will help us in realising the structure of $A(F)$.

The $4$-rank of a finite abelian group $G$ is the $2$-rank of the quotient group $2G/4G$. We say that a group is $2$-elementary if it is isomorphic to the external direct product of some finitely many copies of $\Z/2\Z$. Clearly, an abelian $2$-group $G$ is $2$-elementary if and only if $ \#\left(2G/4G\right) = 1$. Let $L$ be any quadratic number field and $A^{+}(L)$ be the $2$-Sylow subgroup of the narrow class group  of $L$. Let $D_L$ be the discriminant of $L$ and, $S_1(L)$ and $S_2(L)$ be the sets defined as follows:
\begin{align*}
S_1(L) &:= \{ (D_1, D_2): |D_1| < |D_2|,\ D_{L}= D_1D_2,\ D_i \equiv 0 \mbox{ or } 1\Mod{4} \}, \\
S_2(L) &:= \{(1, D_{L})\} \ \cup \{ (D_1, D_2) \in S_1(L): \chi_{ _{D_1}}(p) = 1\;\;\forall  p \mid D_2 \text{\ and } \chi_{_{D_2}}(p) = 1 \;\;\forall  p \mid D_1 \},
\end{align*}
where $\chi_{_{D_{i}}}(p) = \left(\dfrac{D_{i}}{p}\right)$ is the Kronecker symbol for $i = 1 \mbox{ and } 2$. The Kronecker symbol is defined as follows. Let $n \in \Z$, with prime factorisation $n = up_1^{a_1}\cdots p_r^{a_r}$, where $ u \in \{1, -1\}$, $a_i \geq 1$ and $p_i$'s are prime numbers for $i = 1, \ldots, r$. Then, for any $m \in \Z$,  $\left(\dfrac{m}{n}\right) = \left(\dfrac{m}{u}\right)\displaystyle\prod_{i=1}^{r}\left(\dfrac{m}{p_i}\right)^{a_i}$. Here, $\left(\dfrac{m}{p_i}\right)$ is the usual Legendre symbol if $p_i$ is odd, $\left(\dfrac{m}{2}\right) = 
\begin{cases}
    1 & \text{if } m \equiv \pm 1 \Mod{8},\\
    -1 & \text{if } m \equiv \pm 3 \Mod{8}
\end{cases}$, and $\left(\dfrac{m}{u}\right) = 
\begin{cases}
    1 & \text{if } u = 1,\\
    -1 & \text{if } u = -1.
\end{cases}$

\noindent We now state the result by R\'{e}dei and Reichardt.

\begin{theorem}(\cite{RR-theorem}, \cite[Theorem 2.4]{mizu_thesis})\label{RR}
With the entities defined as above, we have $$\# S_1(L) = \#\left(A^{+}(L) / 2A^{+}(L)\right) ~ \mbox{ and }  ~ \#S_2(L) = \# \left(2A^{+}(L) / 4A^{+}(L)\right).$$
\end{theorem}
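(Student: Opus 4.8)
The plan is to prove the two identities separately: the $2$-rank statement $\#S_1(F) = \#(A^+(F)/2A^+(F))$ by classical genus theory, and the $4$-rank statement $\#S_2(F) = \#(2A^+(F)/4A^+(F))$ by dualizing to characters and invoking R\'edei's embedding criterion. First, for the $2$-rank, I would write $D_F = d_1 \cdots d_t$ as a product of its $t$ prime discriminants (each $d_i \in \{-4,\,8,\,-8,\,p^{\ast}\}$), so that by the narrow analogue of the genus formula (Theorem \ref{genusfor}, applied to $F/\Q$) one has $A^+(F)/2A^+(F) \cong (\Z/2\Z)^{t-1}$. The \emph{genus characters} of $F$ are in natural bijection with the factorizations $D_F = D_1 D_2$ into two discriminants: to each unordered factorization one attaches the character $\chi_{D_1,D_2}$ on ideals prime to $D_F$ given by $\chi_{D_1,D_2}(\mathfrak a) = \chi_{D_1}(N\mathfrak a)$ (equivalently $\chi_{D_2}(N\mathfrak a)$), and these are exactly the characters of $A^+(F)$ trivial on $2A^+(F)$. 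I would then observe that the condition $|D_1| < |D_2|$ in $S_1(F)$ selects a unique representative from each unordered factorization (equality of the absolute values is impossible since $D_F$ is not a perfect square), so that $\#S_1(F)$ equals the number of genus characters, namely $\#\widehat{A^+(F)/2A^+(F)} = \#(A^+(F)/2A^+(F)) = 2^{t-1}$.

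Second, for the $4$-rank, I would record the elementary fact that for a finite abelian $2$-group $G$ the quantity $\#(2G/4G)$ equals the number of order-dividing-$2$ characters of $G$ that lie in the subgroup of squares of $\widehat G$ (this follows by decomposing $G$ into cyclic factors and counting those of order $\geq 4$). Applying this with $G = A^+(F)$ reduces the claim to showing that a genus character $\chi_{D_1,D_2}$ is a square in $\widehat{A^+(F)}$ precisely when the Kronecker-symbol conditions $\chi_{D_1}(p)=1$ for all $p \mid D_2$ and $\chi_{D_2}(p)=1$ for all $p \mid D_1$ hold. By class field theory, $\chi_{D_1,D_2}$ corresponds to the narrowly unramified quadratic extension $M = F(\sqrt{D_1}) = F(\sqrt{D_2})$ of $F$, and $\chi_{D_1,D_2}$ is a square in the character group if and only if $M$ embeds in a narrowly unramified cyclic quartic extension of $F$. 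The plan is to translate this cyclic-quartic embedding problem into a Hilbert-symbol (equivalently, ternary quadratic form) condition whose local solvability at each place, by Hasse--Minkowski and Legendre's theorem, is governed exactly by the displayed symbol conditions; the pair $(1, D_F)$ is always admissible since it corresponds to the trivial character. Counting the admissible $(D_1,D_2)$ with $|D_1| < |D_2|$ then gives $\#S_2(F) = \#(2A^+(F)/4A^+(F))$.

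The main obstacle is this last equivalence: that ``$\chi_{D_1,D_2}$ is a square in $\widehat{A^+(F)}$'' coincides with the symbol conditions. This is the genuine arithmetic content of R\'edei--Reichardt. I expect to handle it by introducing the R\'edei matrix $M = (m_{ij})$ over $\mathbb{F}_2$ with $(-1)^{m_{ij}} = \chi_{d_i}(p_j)$ off the diagonal and row sums zero on the diagonal, identifying the factorizations counted in $S_2(F)$ with $\ker M$ modulo the swap $v \mapsto \mathbf 1 - v$, and proving $\operatorname{rank}_{\mathbb{F}_2} M = (t-1) - (\text{$4$-rank})$. The delicate point is verifying that the embedding/solvability criterion is symmetric in $D_1$ and $D_2$ (R\'edei reciprocity) and is faithfully captured by $\ker M$; once this dictionary between genus characters, narrowly unramified extensions, and the Artin map is in place, the remainder is linear algebra over $\mathbb{F}_2$.
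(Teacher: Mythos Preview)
The paper does not supply its own proof of this theorem: it is stated with attribution to R\'edei--Reichardt \cite{RR-theorem} and to Mizusawa's thesis \cite{mizu_thesis}, and is used purely as a black box to compute $\#S_2(F)$ in Lemmas \ref{order A(F)(5,3,3)} and \ref{order A(F)(5,7,3)}. There is therefore nothing in the paper to compare your argument against.

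That said, your outline is the standard route to R\'edei--Reichardt and is essentially sound. The $2$-rank identity via the bijection between unordered discriminant factorizations and genus characters is correct as stated. For the $4$-rank, your duality observation that $\#(2G/4G)$ equals the number of order-$\le 2$ characters lying in $\widehat{G}^2$ is right, and the translation ``$\chi_{D_1,D_2}$ is a square $\Longleftrightarrow$ $F(\sqrt{D_1})$ embeds in a narrowly unramified cyclic quartic over $F$'' is the classical reformulation. The one place where your sketch is genuinely incomplete rather than merely terse is the claimed equivalence between the cyclic-quartic embedding and the local symbol conditions $\chi_{D_1}(p)=1$ for $p\mid D_2$ and vice versa: this is exactly the content of the R\'edei--Reichardt construction (building an explicit quartic from a nontrivial solution of $x^2 - D_1 y^2 - D_2 z^2 = 0$), and your appeal to Hasse--Minkowski/Legendre together with the R\'edei matrix is the right machinery, but you have not yet written down the actual passage from solvability of that ternary form to existence of the narrowly unramified cyclic quartic. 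If you intend to include a full proof, that step needs to be made explicit; otherwise, citing \cite{RR-theorem} as the paper does is entirely appropriate here.
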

\begin{rmk}
We observe from Theorem \ref{RR} that $A^{+}(L)$ is $2$-elementary if and only if $\#S_2(L) = 1$. In that case, we have $=\#A^{+}(L) = \#S_1(L)$.  
\end{rmk}  
For any number field $L$, the class group $\mathcal{C}l(L)$ and the narrow class group $\mathcal{C}l^{+}(L)$ are all abelian groups. Also, the $2$-Sylow subgroup $A(L)$ of $\mathcal{C}l({L})$ can be viewed as a quotient group of the $2$-Sylow subgroup $A^{+}(L)$ of $\mathcal{C}l^{+}({L})$. Therefore, if $A^{+}(L)$ is a $2$-elementary group, then so is $A(L)$. For the fields of our choice, even the converse is true by the following proposition.

\begin{propn}\cite[Proposition 2.1]{CLS_2 class group}\label{A(K) and A+(K) are 2 elmtry}
Let $L = \Q(\sqrt{d})$ be a quadratic field, where $d \geq 1$ is a square-free integer having a prime divisor which is congruent to $3 \pmod 4$. If $A(L)$ is $2$-elementary, then so is $A^{+}(L)$.
\end{propn}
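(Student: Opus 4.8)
The plan is to compare the \emph{orders} of $A(F)$ and $A^{+}(F)$ rather than their group structures directly, and to deduce that $A^{+}(F)$ is $2$-elementary by showing $2A^{+}(F)=0$. The bridge between the two groups is the classical exact sequence coming from the signs of units: writing $E(F)$ for the unit group and $\mathrm{sgn}\colon E(F)\to\{\pm 1\}^{2}$ for the map recording the signs under the two real embeddings, one has
\[
1 \longrightarrow \{\pm 1\}^{2}/\mathrm{sgn}(E(F)) \longrightarrow \mathcal{C}l^{+}(F) \longrightarrow \mathcal{C}l(F) \longrightarrow 1 .
\]
Passing to $2$-Sylow subgroups (an exact operation on abelian groups) yields a short exact sequence relating $A^{+}(F)$ and $A(F)$ whose kernel is $\{\pm 1\}^{2}/\mathrm{sgn}(E(F))$. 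The first task is to pin down the order of this kernel, and this is exactly where the hypothesis enters.

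First I would show that $N_{F/\Q}(\eps)=+1$ for the fundamental unit $\eps$, equivalently that $-1\notin N_{F/\Q}(F^{\times})$. This is the crucial use of the existence of a prime $p\equiv 3\pmod 4$ dividing the squarefree integer $d$: such a $p$ ramifies in $F/\Q$ with $v_{p}(d)$ odd, and a local Hilbert-symbol computation at $p$ (using $(-1,p)_{p}=-1$ for $p\equiv 3\pmod 4$) shows that $-1$ is not a local norm at $p$, hence not a global norm. Since $N_{F/\Q}(\eps)=-1$ would make $-1$ a norm, we conclude $N_{F/\Q}(\eps)=+1$. Consequently $\mathrm{sgn}(E(F))=\{(+,+),(-,-)\}$ has order $2$, so the kernel above has order $2$, and the sequence becomes $1\to\Z/2\Z\to A^{+}(F)\to A(F)\to 1$; in particular $\#A^{+}(F)=2\,\#A(F)$.

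With the orders linked, the remaining work is a genus-theoretic count of both sides in terms of $t$, the number of primes of $\Q$ ramifying in $F$. On the narrow side, Theorem \ref{RR} gives $\#\bigl(A^{+}(F)/2A^{+}(F)\bigr)=\#S_{1}(F)$, and counting the factorizations of $D_{F}$ into coprime fundamental discriminants yields $\#S_{1}(F)=2^{t-1}$. On the ordinary side, I would apply Chevalley's genus formula (Theorem \ref{genusfor}) to $F/\Q$: since $h(\Q)=1$ the lifting map is trivial, so by Remark \ref{rmk to genus} the nontrivial element of $\mathrm{Gal}(F/\Q)$ acts as $-1$ and $A(F)^{G}=A(F)[2]$ with $G=\mathrm{Gal}(F/\Q)$. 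Because $-1\notin N_{F/\Q}(F^{\times})$, the unit index $[E(\Q):E(\Q)\cap N_{F/\Q}(F^{\times})]$ equals $2$, so the formula reads $\#A(F)[2]=2^{t-1}/2=2^{t-2}$.

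Finally I would assemble the pieces. If $A(F)$ is $2$-elementary then $\#A(F)=\#A(F)[2]=2^{t-2}$, whence $\#A^{+}(F)=2\cdot 2^{t-2}=2^{t-1}=\#\bigl(A^{+}(F)/2A^{+}(F)\bigr)$. This forces $2A^{+}(F)=0$, i.e.\ $A^{+}(F)$ is $2$-elementary, completing the argument. The main obstacle is the unit-norm step: everything hinges on establishing $N_{F/\Q}(\eps)=+1$ (equivalently $-1\notin N_{F/\Q}(F^{\times})$) from the single prime $p\equiv 3\pmod 4$, since this simultaneously fixes the kernel of the narrow-to-ordinary map at order $2$ and supplies the extra factor of $2$ in the denominator of Chevalley's formula, which is precisely what makes the narrow $2$-rank exceed the ordinary $2$-rank by exactly one. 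Beyond that, the argument is bookkeeping with the factor of two, which must be tracked carefully.
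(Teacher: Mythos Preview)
The paper does not prove this proposition; it is quoted verbatim from \cite{CLS_2 class group} (Proposition 2.1 there) and used as a black box, so there is no in-paper argument to compare against.

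Your proposal is correct. The key observation---that a prime $p\equiv 3\pmod 4$ dividing $d$ forces $-1\notin N_{F/\Q}(F^{\times})$, hence $N_{F/\Q}(\eps)=+1$---is exactly what drives everything: it makes $\#A^{+}(F)=2\,\#A(F)$ via the sign sequence, and simultaneously puts the index $[E(\Q):E(\Q)\cap N_{F/\Q}(F^{\times})]=2$ into Chevalley's formula so that $\#A(F)[2]=2^{t-2}$. Combining this with Gauss's narrow genus count $\#\bigl(A^{+}(F)/2A^{+}(F)\bigr)=2^{t-1}$ (which is what Theorem~\ref{RR} records via $\#S_{1}(F)$) and the hypothesis $A(F)=A(F)[2]$ gives $\#A^{+}(F)=2^{t-1}=\#\bigl(A^{+}(F)/2A^{+}(F)\bigr)$, forcing $2A^{+}(F)=0$. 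The only step worth tightening in a write-up is the appeal to Remark~\ref{rmk to genus}: you should state explicitly that $A(\Q)$ is trivial so the lifting hypothesis holds and $A(F)^{G}=A(F)[2]$; otherwise the bookkeeping is sound.
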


We are now in a position to prove that $A(F)$ is $2$-elementary for quadratic fields $F = \Q(\sqrt{2p_1q_1q_2})$, where the primes $p_1$, $q_1$ and $q_2$ follow one of the congruence conditions (\ref{cond1}) or (\ref{cond2}) together with some constraints on Legendre symbols. 

\begin{lemma}\label{order A(F)(5,3,3)}
Let $F = \Q(\sqrt{2p_1q_1q_2})$ with $\ p_1 \equiv 5 \Mod{8},\ q_1 \equiv 3 \Mod{8}$, and $\ q_2 \equiv 3 \Mod{8}$. Then the 2-class group $A(F)$ is of the form $\Z/2\Z\oplus \Z/2\Z$ if and only if one of the following conditions hold:
\begin{enumerate}
\item $\left( \dfrac{q_1q_2}{p_1}\right) = -1$.
\item $\left( \dfrac{q_1}{p_1}\right) = 1$ and $\left( \dfrac{q_2}{p_1}\right) = 1$.
\end{enumerate}
\end{lemma}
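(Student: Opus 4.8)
The plan is to reduce the claim to a computation of $\#S_2(F)$. The discussion preceding the lemma already identifies the genus field of $F$ as $\Q(\sqrt{2},\sqrt{p_1},\sqrt{q_1q_2})$, so the $2$-rank of $A(F)$ equals $2$; hence $A(F)\cong\Z/2\Z\oplus\Z/2\Z$ if and only if $A(F)$ is $2$-elementary. Since $d=2p_1q_1q_2$ has the prime divisor $q_1\equiv 3\Mod 4$, Proposition \ref{A(K) and A+(K) are 2 elmtry} shows $A(F)$ is $2$-elementary exactly when $A^{+}(F)$ is, and by Theorem \ref{RR} and the remark following it this happens precisely when $\#S_2(F)=1$. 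Thus the whole lemma is equivalent to the assertion that $\#S_2(F)=1$ if and only if $\left(\tfrac{q_1}{p_1}\right)$ and $\left(\tfrac{q_2}{p_1}\right)$ are not both $-1$. Below I write $(x/p)$ for the Legendre/Kronecker symbol.

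To compute $S_2(F)$, I would first factor $D_F=8p_1q_1q_2$ into prime discriminants as $8\cdot p_1\cdot(-q_1)\cdot(-q_2)$ (using $p_1\equiv 1$ and $q_1,q_2\equiv 3\Mod 4$). Every factorization $D_F=D_1D_2$ in $S_1(F)$ amounts to distributing these four prime discriminants across $D_1$ and $D_2$, with the factor $8$ landing on exactly one side; call the side free of $8$ the odd part. The decisive simplification comes from the prime $2$: the side carrying the odd part must have Kronecker symbol $+1$ at $2$, i.e.\ be $\equiv\pm 1\Mod 8$. Since $p_1\equiv -q_1\equiv -q_2\equiv 5\Mod 8$, a product of $k$ of these three odd discriminants is $\equiv 5^{k}\Mod 8$, which is $\equiv 1\Mod 8$ only when $k$ is even. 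Hence only the trivial factorization ($k=0$) and the three ``$(2,2)$'' splits ($k=2$) can belong to $S_2(F)$; the four ``$(1,3)$'' splits are eliminated all at once.

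There remain the three splits whose odd part is $q_1q_2$, $-p_1q_1$, or $-p_1q_2$. For each I would test the two R\'edei conditions at the remaining odd primes, using the fixed values $(2/p_1)=(2/q_1)=(2/q_2)=-1$, $(-1/p_1)=1$, $(-1/q_1)=(-1/q_2)=-1$, reciprocity $(p_1/q_i)=(q_i/p_1)$, and $(q_1/q_2)(q_2/q_1)=-1$. For the split $(8p_1,\,q_1q_2)$ the conditions collapse to $(q_1/p_1)=(q_2/p_1)=-1$, the remaining condition at $p_1$ (namely $(q_1q_2/p_1)=1$) being automatic once both symbols equal $-1$. For the splits $(8q_1,\,p_1q_2)$ and $(8q_2,\,p_1q_1)$ the two one-sided conditions turn out to be mutually contradictory: the condition on the $8$-side forces $(q_i/p_1)=-1$ together with a definite sign on $(q_1/q_2)$, whereas the reciprocity relation between $q_1$ and $q_2$ makes the complementary condition demand the opposite product, so neither split ever lies in $S_2(F)$. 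Consequently $\#S_2(F)=2$, with the extra element $(8p_1,\,q_1q_2)$, exactly when $(q_1/p_1)=(q_2/p_1)=-1$, and $\#S_2(F)=1$ otherwise.

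Finally, the negation of ``$(q_1/p_1)=(q_2/p_1)=-1$'' is precisely the disjunction of ``$(q_1q_2/p_1)=-1$'' and ``$(q_1/p_1)=(q_2/p_1)=1$'', which are cases (1) and (2); chaining this back through the reductions above yields the stated equivalence. I expect the main obstacle to be the bookkeeping in the two non-contributing $(2,2)$ splits: keeping the reciprocity sign between $q_1$ and $q_2$ (both $\equiv 3\Mod 4$) correct is exactly what produces the contradiction that removes them, and a slip there would spuriously enlarge $S_2(F)$. Treating the symbol at $2$ as a genuine Kronecker symbol, rather than a naive Legendre symbol, is the other point to watch.
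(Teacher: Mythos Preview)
Your proposal is correct and follows the same route as the paper: reduce to $\#S_2(F)=1$ via Proposition~\ref{A(K) and A+(K) are 2 elmtry} and Theorem~\ref{RR}, then inspect the R\'edei factorizations of $D_F=8p_1q_1q_2$; your mod-$8$ observation dispatching the four $(1,3)$ splits at once is a tidy streamlining of the paper's row-by-row table. One labeling slip to fix: the two remaining splits you write as $(8q_1,p_1q_2)$ and $(8q_2,p_1q_1)$ must carry the signs $(-8q_1,-p_1q_2)$ and $(-8q_2,-p_1q_1)$ so that $D_i\equiv 0,1\Mod 4$; with those signs your contradiction via $(q_1/q_2)(q_2/q_1)=-1$ goes through exactly as you describe.
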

\begin{proof}
By Proposition \ref{A(K) and A+(K) are 2 elmtry}, it suffices to prove the equivalence for $A^{+}(F)$ in place of $A(F)$. For $A^{+}(F)$, we can exploit Theorem \ref{RR}. The discriminant of $F$ is equal to $8p_1q_2q_2$, which can be expressed in the following ways as $D_1D_2$, where $D_i \equiv 0,1 \Mod{4}$ and $|D_1| < |D_2|$. 
\begin{equation*}
(1, 8pq_1q_2),\ (8, pq_1q_2),\ (p, 8q_1q_2),\ (-q_1, -8pq_2),\ (-q_2 ,-8pq_1),\ (8p, q_1q_2),\ (-8q_1, -pq_2),\ (-8q_2, -pq_1).
\end{equation*}
These tuples account for the elements of the set $S_1(F)$. We now enlist the Kronecker symbols corresponding to each tuple in $S_1(F)$ other than $(1, 8p_1q_1q_2)$ to see which of these belong to the set $S_2(F)$.
\begin{table}[hbt!]
\begin{center}
\caption{Kronecker symbols corresponding to each element in $S_1(F)$}
\smallskip
\label{table1 RR}
\begin{tabular}{|c| c| c| } 
\hline 
\rule{0pt}{2ex}Sr. No. & Tuple & Kronecker Symbols  \\
\hline
\rule{0pt}{4ex}$1$ & $(8, p_1q_1q_2)$  & $\left(\dfrac{2}{p_1}\right),\left(\dfrac{2}{q_1}\right),\left(\dfrac{2}{q_2}\right),\left(\dfrac{p_1q_1q_2}{2}\right)$ \\
\rule{0pt}{4ex}$2$ & $(p_1, 8q_1q_2)$ & $\left(\dfrac{p_1}{2}\right),\left(\dfrac{p_1}{q_1}\right),\left(\dfrac{p_1}{q_2}\right),\left(\dfrac{2q_1q_2}{p_1}\right)$ \\
\rule{0pt}{4ex}$3$ & $(-q_1, -8p_1q_2)$ & $\left(\dfrac{-q_1}{2}\right),\left(\dfrac{-q_1}{p_1}\right),\left(\dfrac{-q_1}{q_2}\right),\left(\dfrac{-2p_1q_2}{q_1}\right)$ \\
\rule{0pt}{4ex}$4$ & $(-q_2, -8p_1q_1)$ & $\left(\dfrac{-q_2}{2}\right),\left(\dfrac{-q_2}{p_1}\right),\left(\dfrac{-q_2}{q_1}\right),\left(\dfrac{-2p_1q_1}{q_2}\right)$ \\
\rule{0pt}{4ex}$5$ & $(8p_1, q_1q_2)$ & $ \left(\dfrac{2p_1}{q_1}\right),\left(\dfrac{2p_1}{q_2}\right),\left(\dfrac{q_1q_2}{2}\right),\left(\dfrac{q_1q_2}{p_1}\right)$ \\
\rule{0pt}{4ex}$6$ & $(-8q_1, -p_1q_2)$ & $ \left(\dfrac{-2q_1}{p_1}\right),\left(\dfrac{-2q_1}{q_2}\right),\left(\dfrac{-p_1q_2}{2}\right),\left(\dfrac{-p_1q_2}{q_1}\right)$ \\
\rule{0pt}{4ex}$7$ & $(-8q_2, -p_1q_1)$ & $ \left(\dfrac{-2q_2}{p_1}\right), \left(\dfrac{-2q_2}{q_1}\right),\left(\dfrac{-p_1q_1}{2}\right),\left(\dfrac{-p_1q_1}{q_2}\right)$ \\[2ex]
\hline
\end{tabular}
\end{center}
\end{table}\\
In each case of the aforementioned criteria on Legendre symbols, we notice that there is at least one symbol that has value $-1$ in each row of Table \ref{table1 RR}. This implies that in each of the cases, order of $S_2(F)$ is equal to $1$. This means that $A^{+}(F)$, and hence $A(F)$, are both $2$-elementary. As ${\rm{rank}}_{2}A(F) =2$, it indeed must be isomorphic to $\Z/2\Z \oplus \Z/2\Z$.

\smallskip

Conversely, assuming $A(F)$ is $2$-elementary, by Proposition \ref{A(K) and A+(K) are 2 elmtry}, $A^{+}(F)$ must be $2$-elementary. In that case, at least one entry from each row in Table \ref{table1 RR} should be equal to $-1$. By supposing that at least one entry is $-1$, we exactly obtain the options mentioned in this lemma.
\end{proof}

Following the same approach, we obtain the next result (irrespective of any Legendre symbol restrictions).

\begin{lemma}\label{order A(F)(5,7,3)}
Let $F = \Q(\sqrt{2p_1q_1q_2})$ with $\ p_1 \equiv 5 \Mod{8},\ q_1 \equiv 7 \Mod{8}$, and $\ q_2 \equiv 3 \Mod{8}$. Then the $2$-class group $A(F)$ is isomorphic to $\Z/2\Z\oplus \Z/2\Z$. 
\end{lemma}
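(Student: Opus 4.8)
The plan is to mimic the proof of Lemma \ref{order A(F)(5,3,3)} exactly, the only difference being the congruence $q_1 \equiv 7 \Mod 8$ in place of $q_1 \equiv 3 \Mod 8$. First I would record that the discriminant is again $D_F = 8p_1q_1q_2$, so it admits the same eight factorizations $D_F = D_1 D_2$ with $D_i \equiv 0,1 \Mod 4$ and $|D_1| \le |D_2|$, giving $\# S_1(F) = 8$ and hence $\mathrm{rank}\, A^{+}(F) = \mathrm{rank}\, A(F) = 2$. As before, the genus field is $F_G = \Q(\sqrt 2, \sqrt{p_1}, \sqrt{q_1 q_2})$, confirming the $2$-rank is $2$. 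The whole content is then to show $\# S_2(F) = 1$, i.e. that $A^{+}(F)$ is $2$-elementary, so that by Proposition \ref{A(K) and A+(K) are 2 elmtry} and the rank count $A(F) \cong \Z/2\Z \oplus \Z/2\Z$.

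Next I would reproduce Table \ref{table1 RR} verbatim (the tuples and the Kronecker symbols are identical) and argue that in each of the seven nontrivial rows at least one Kronecker symbol equals $-1$, forcing that tuple out of $S_2(F)$. The key inputs are the fixed congruences: since $p_1 \equiv 5 \Mod 8$ we have $\left(\frac{2}{p_1}\right) = -1$, which already kills row $1$; since $q_2 \equiv 3 \Mod 8$ we have $\left(\frac{2}{q_2}\right) = -1$ and $\left(\frac{-1}{q_2}\right) = -1$; and the new hypothesis $q_1 \equiv 7 \Mod 8$ gives $\left(\frac{2}{q_1}\right) = +1$ but $\left(\frac{-1}{q_1}\right) = -1$. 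The point of the lemma — and the reason no Legendre-symbol restriction is needed here, unlike Lemma \ref{order A(F)(5,3,3)} — is that these fixed supplementary-law values alone already supply a $-1$ in every row, so the variable symbols such as $\left(\frac{q_1 q_2}{p_1}\right)$ never have to be invoked.

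The one step that needs genuine care is verifying each row individually, since the places $2$, $q_1$, $q_2$ must be checked against the right congruences and quadratic reciprocity must be applied with the correct sign corrections for the primes $\equiv 3 \Mod 4$. Concretely I expect: row $1$ via $\left(\frac{2}{p_1}\right) = -1$; row $2$ I would evaluate $\left(\frac{p_1}{2}\right) = +1$ but rely instead on $\left(\frac{2 q_1 q_2}{p_1}\right)$ or reciprocity to produce a $-1$; rows $3$ and $4$ via the factor $\left(\frac{-q_i}{2}\right)$ or $\left(\frac{-1}{q_i}\right) = -1$ coming from $q_i \equiv 3 \Mod 4$ (note both $q_1 \equiv 7$ and $q_2 \equiv 3$ are $\equiv 3 \Mod 4$, so $\left(\frac{-1}{q_i}\right) = -1$ in each case); rows $5$--$7$ via the supplementary law at $2$ using $\left(\frac{q_1 q_2}{2}\right)$, $\left(\frac{-p_1 q_2}{2}\right)$, $\left(\frac{-p_1 q_1}{2}\right)$, where the residues of $p_1 q_1 q_2$-type products modulo $8$ are pinned down by the three fixed congruences.

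The main obstacle is therefore purely the bookkeeping: I must confirm that for \emph{every} row a guaranteed $-1$ survives \emph{without} assuming anything about the Legendre symbols $\left(\frac{q_1}{p_1}\right)$, $\left(\frac{q_2}{p_1}\right)$, $\left(\frac{q_1 q_2}{p_1}\right)$. If some row turned out to depend on those variable symbols, the ``irrespective of any restrictions'' claim would fail and one would be forced back into the case analysis of Lemma \ref{order A(F)(5,3,3)}; so the crux is checking that the change $q_1 \equiv 3 \mapsto q_1 \equiv 7 \Mod 8$ only flips $\left(\frac{2}{q_1}\right)$ from $-1$ to $+1$ while leaving $\left(\frac{-1}{q_1}\right) = -1$ intact, and that every row where $q_1 \equiv 3$ previously contributed through $\left(\frac{2}{q_1}\right)$ now has an alternative fixed $-1$ available. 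Once that is confirmed we conclude $\#S_2(F) = 1$, hence $A^{+}(F)$ and $A(F)$ are $2$-elementary, and combined with $\mathrm{rank}\, A(F) = 2$ this yields $A(F) \cong \Z/2\Z \oplus \Z/2\Z$ unconditionally.
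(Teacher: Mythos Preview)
Your approach is exactly the paper's (which literally just says ``following the same techniques''), and the plan is sound. Two small corrections to your bookkeeping expectations, though. First, for $p_1\equiv 5\pmod 8$ one has $\left(\tfrac{p_1}{2}\right)=-1$, not $+1$, so row~2 is in fact immediate. Second, and more substantively, your assertion that ``the fixed supplementary-law values alone already supply a $-1$ in every row'' fails for rows~3 and~6: since $-q_1\equiv 1\pmod 8$ and $-p_1q_2\equiv 1\pmod 8$, the Kronecker symbols at $2$ in those rows are $+1$. What saves you there is that the three remaining symbols in each row take the values $\{a,-c,ac\}$ (row~3) and $\{-a,c,ac\}$ (row~6), where $a=\left(\tfrac{q_1}{p_1}\right)$ and $c=\left(\tfrac{q_1}{q_2}\right)$; in each case their product is identically $-1$, so at least one of them is $-1$ regardless of the values of $a$ and $c$. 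Hence no Legendre-symbol hypothesis is needed and $\#S_2(F)=1$ holds unconditionally, exactly as claimed.
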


\section{$2$-class groups of the sub-extensions of $K_\infty/K$}

While inspecting the field $K = \Q(\sqrt{pq})$, where $p \equiv 3 \Mod{8}$ and $q \equiv 9 \Mod{16}$ with some additional conditions, Kumakawa in \cite[Lemma 2.1]{kumakawa2} derived an upper bound on the order of $A(K_{n+1})$ for all $n \geq 0$ in terms of the orders of $2$-class groups of subfields of $K_{n+1}$. More precisely, the subfields involved were $K_n$ and $K_n^{\prime}$, where $K_n^{\prime}$ denotes the subfield of $K_{n+1}$ containing $\Q_n$, different from $K_n$ and $\Q_{n+1}$. For example, if $K =\Q(\sqrt{d})$, then $K_0^{\prime} = \Q( \sqrt{2d}),\ K_1^{\prime} = \Q\left(\sqrt{(2 + \sqrt{2})d}\right)$, and so on. In the spirit of Kumakawa's work, we extract a tighter upper bound for the fields satisfying conditions (\ref{cond1}) and (\ref{cond2}) of Section 1.
\begin{center}
\begin{figure}[hbt!] 
 \begin{tikzpicture}

    \node (Q1) at (0,0) {$\Q_n$};
    \node (Q2) at (3,2) {$K_n^{\prime}$};
    \node (Q3) at (0,2) {$K_n$};
    \node (Q4) at (-3,2) {$\Q_{n+1}$};  
    \node (Q5) at (0,4) {$K_{n+1}$};
    
    \draw (Q1)--(Q2);
    \draw (Q1)--(Q3); 
    \draw (Q1)--(Q4);
    \draw (Q2)--(Q5);
    \draw (Q3)--(Q5);
    \draw (Q4)--(Q5);
    
    \node (R1) at (2.1,3.1) {$\langle \sigma\tau \rangle$};
    \node (R2) at (0.4, 3.1) {$\langle \tau \rangle$};
    \node (R3) at (-2, 3.1) {$\langle \sigma \rangle$};
     \end{tikzpicture}
    \end{figure}
\end{center}

\begin{lemma}\label{bound of A_{n+1}}
Let $K = \Q(\sqrt{p_1q_1q_2})$ with $\ p_1 \equiv 5 \Mod{8}, \ q_1 \equiv 3 \Mod{4}$, and $\ q_2 \equiv 3 \Mod{8}$. Let $n \geq 0$. Suppose $\tau$ is the generator of \rm{Gal}$(K_{n+1}/K_n)$ and $\sigma$ is the generator of \rm{Gal}$(K_{n+1}/\Q_{n+1})$. Then $\#A_{n+1} \leq \#A_{n+1}^{\tau + 1}\cdot\#A(K_n^{\prime})/2.$ In particular, $\#A_{n+1} \leq \#A_n\cdot\#A(K_n^{\prime})/2$.
\end{lemma}
\begin{proof}
We shall use $[\mathfrak{a}]^{\sigma}$ to denote the the action of $\sigma$ on $[\mathfrak{a}]$. Since $K_{n+1}/\Q_n$ is a bi-quadratic extension, Gal($K_{n+1}/K_n^{\prime}$) = $ \langle \sigma\tau \rangle$. As $h(\Q_{n+1})$ is odd (cf. Theorem 10.4 of \cite{washington_book}), the norm map from $A_{n+1}$ to $\mathcal{C}l(\Q_{n+1})$ is trivial and hence, $\sigma$ acts as $-1$ on $A_{n+1}$. This implies that $A_{n+1}^{\sigma\tau -1} = A_{n+1}^{\tau +1}$, where $A_{n+1}^{\sigma\tau -1} = \{ [\mathfrak{a}]^{\sigma\tau}\cdot [\mathfrak{a}]^{-1}: [\mathfrak{a}] \in A_{n+1} \}$ and $A_{n+1}^{\tau +1}$ is defined similarly. We now consider the following exact sequence:
$$1 \longrightarrow A_{n+1}^{ \langle \sigma\tau \rangle} \longrightarrow A_{n+1} \longrightarrow A_{n+1}^{\sigma\tau -1} \longrightarrow 1.$$
Thus, we obtain $\#A_{n+1} = \#A_{n+1}^{ \langle \sigma\tau \rangle} \cdot \#A_{n+1}^{\sigma\tau -1} = \#A_{n+1}^{ \langle \sigma\tau \rangle}\cdot\#A_{n+1}^{\tau +1}.$ Now, applying the genus formula for the quadratic extension $K_{n+1}/K_n^{\prime}$, we have
$$\#A_{n+1}^{Gal( K_{n+1}/K_n^{\prime})}= \#A_{n+1}^{\langle \sigma\tau \rangle} = \dfrac{\#A(K_n^{\prime})\cdot 2^{t-1}}{[E(K_n^{\prime}): E(K_n^{\prime}) \cap N_{K_{n+1}/K_n^{\prime}}(K_{n+1}^{\times})]},$$
where $t$ is the number of primes ideals of $K_n^{\prime}$ ramified in $K_{n+1}$.  From the congruence modulo $8$ and $4$ conditions satisfied by $p_1, q_1$ and $q_2$, we note that $K/\Q$ is unramified at $2$. Note that $2$ is inert in $K/\Q$ when $q_1 \equiv 3\Mod{8}$, and splits when $q_1 \equiv 7\Mod{8}$. Since $2$ is unramified in $K/\Q$, the prime(s) above $2$ is (are) unramified in $K_n/\Q_n$ for all $n \geq 0$. Also, the prime(s) above $2$ is (are) ramified in the extension $K_n^{\prime}/\Q_n$ for all $n \geq 0$. Hence, the prime(s) above $2$ is (are) unramified in $K_{n+1}/K_n^{\prime}$. The primes above $p_1, q_1$ and $q_2$ are ramified in $K_{n+1}/\Q_{n+1}$ and $K_n^{\prime} /Q_n$, but not in the extension $\Q_{n+1}/\Q_n$. Combining all these, we conclude that $K_{n+1}/K_n^{\prime}$ is an unramified extension for all $n$ and hence, $t=0$. Therefore, $\#A_{n+1}^{Gal( K_{n+1}/K_n^{\prime})} \leq \#A(K_n^{\prime})/2$ and hence, $\#A_{n+1} \leq \#A_{n+1}^{\tau +1} \cdot \#A(K_n^{\prime})/2.$ Since $1+\tau$ acts as the norm map from $A_{n+1}$ to $A_n$, it follows that  $\#A_{n+1}^{\tau +1} \leq \#A_n$. Thus, $\#A_{n+1} \leq \#A_n\cdot\#A(K_n^{\prime})/2$. \end{proof}

\subsection*{Proof of Theorem \ref{(5,3,3,-1)}}
Suppose the primes $p_1, q_1$ and $q_2$ are congruent to $5$, $3$ and $3$ modulo $8$ respectively, along with $\left( \dfrac{q_1q_2}{p_1} \right)= -1$. We have $K_1 = \Q( \sqrt{2}, \sqrt{p_1q_1q_2})$, and from Remark \ref{A_n is cyclic}, $A_1$ is cyclic. By Lemma \ref{bound of A_{n+1}}, $\#A_1 \leq \#A_0 \cdot \#A(F)/2$, where $F=K_0^{\prime}=\Q(\sqrt{2p_1q_1q_2})$. Combining lemmas \ref{A_0 = 2} and \ref{order A(F)(5,3,3)}, we conclude that  $A_1$ is a cyclic group of order $2$ or $4$. Further, we note from Lemma \ref{bound of A_{n+1}} that the order of $A_1$ also depends on $A_1^{\tau +1}$, where Gal($K_1/K$) = $\langle \tau \rangle$. Since $A_1^{\tau +1} \subseteq A_1^{\langle \tau \rangle}$, we have $\# A_1^{\tau +1} \leq \# A_1^{\langle \tau \rangle}$. By the genus formula, $\# A_1^{\langle \tau \rangle} \leq \#A_0\cdot 2^{t-1}$, where $t$ is the number of places of $K$ ramified in $K_1$. From the congruence modulo 8 conditions, $D_K \equiv 5 \Mod{8}$, where $D_K$ is the discriminant of $K$. Consequently, the rational prime 2 is inert in $K/\Q$, and only one place of $K$ gets ramified in $K_1$. Therefore, $t =1$ and $\# A_1^{\tau +1}\leq \# A_1^{\langle \tau \rangle} \leq 2$. 

\smallskip 

If $\#A_1^{\langle \tau \rangle} = 1$, then $\#A_1 \leq 1 \cdot 4/2 = 2$ by Lemma \ref{bound of A_{n+1}}. With the $2$-rank of $A_1$ being $1$, order of $A_1$ must be $2$. Hence, $\#A_1 = \#A_0 = 2$.

\smallskip

Now suppose that $\#A_1^{\langle \tau \rangle} = 2$. We claim that $A_1$ cannot have order 4. Suppose on the contrary, $A_1 = \langle [\mathfrak{a}] \rangle$ such that $[\mathfrak{a}]$ has order $4$. In that case, $A_1^{\langle \tau \rangle} = \{ id, [\mathfrak{a}]^2 \}$. Since $A_1$ is a Gal($K_1/K$)-module, 
$[\mathfrak{a}]^{\tau}$ is either equal to $[\mathfrak{a}]$ or $[\mathfrak{a}]^{-1}$. If $[\mathfrak{a}]^{\tau} = [\mathfrak{a}]$, then $\#A_1^{\langle \tau \rangle} = 4,$ which is not true. Therefore, $[\mathfrak{a}]^{\tau} = [\mathfrak{a}]^{-1}$, and consequently, $A_1^{\tau +1} = \{ id \}$. We have $\#A_{1} \leq \#A_{1}^{\tau + 1}\cdot\#A(F)/2$ by Lemma \ref{bound of A_{n+1}}. It follows that $\# A_1 \leq  1\cdot 4/2 = 2$, which contradicts our assumption. Therefore, $\#A_1 = \#A_0 = 2$. $\hfill\Box$

\subsection*{Proof of Corollary \ref{lambda of 5,3,3,-1}}
Since the discriminant $D_K$ is congruent to $5$ modulo $8$, the prime $2$ is inert in $K/\Q$. Moreover, $2$ is ramified in $\Q_1/\Q$. Thus, the prime above $2$ is totally ramified in $K_1/K$. The same argument holds for any extension $K_{n}/K$ for all $n \geq 1$. Applying Theorem \ref{(5,3,3,-1)} and Theorem \ref{fukuda's result} together, $\#A_n = \#A_0 = 2$ for all $n \geq 0$. Thus, $A_n$ is isomorphic to $\Z/2\Z$ for all $n \geq 0$, and the Iwasawa module $X_{\infty}$ corresponding to the $\Z_2$-extension of $K$ is isomorphic to $\Z/2\Z$. It follows that the Iwasawa invariant $\lambda$ vanishes. 

\smallskip

When we look at the $\Z_2$-extension of $F$, we recognize that the fields at layers $n \geq 1$ are the same as the ones in the $\Z_2$-extension of $K$. As the order of the class group at each layer is $2$, the Iwasawa module associated with $F$ is also isomorphic to $\Z/2\Z$ and the corresponding $\lambda$-invariant vanishes. $\hfill\Box$

\subsection*{Proof of Theorem \ref{(5,3,3,1)}}

Let $\p_1$, $\q_1$ and $\q_2$ be the prime ideals above the rational primes $p_1$, $q_1$ and $q_2$ respectively in $K/\Q$, and $\p^{\prime}_1$, $\q^{\prime}_1$ and $\q^{\prime}_2$ be the corresponding ideals in $F/\Q$.  We employ Kuroda-Kubota's class number formula to get the desired result. In order to appeal to the formula, we need to evaluate the Hasse unit index $Q(K_1)$ which involves the fundamental units of $K$, $F$, and $\Q_1$ along with their square-roots. Let $\eps_1$, $\eps_2$ and $\eps_3$ be the fundamental units of $K$, $F$, and $\Q_1$ respectively. Suppose $\eps_1 = \frac{a + b\sqrt{p_1q_1q_2}}{2}$, where $a$ and $b$ are integers of same parity. If $N_{K/\Q}(\eps_1) = -1$, then taking modulo $q_1$ of the norm equation, we obtain $a^2 \equiv -4 \Mod{q_1}$, which suggests that $-1$ is a quadratic residue modulo $q_1$. This is not possible as $q_1 \equiv 3 \Mod{8}$. Therefore, $N_{K/\Q}(\eps_1) = 1 $, and likewise, $ N_{F/\Q}(\eps_2) = 1$. The fundamental unit $\eps_3 = 1 +\sqrt{2}$ has norm -1 over $\Q$. From all these norm values, we conclude from Theorem \ref{kubota} that the fundamental system of units of $K_1$ must be one of $\{ \eps_1, \eps_2, \eps_3 \}, \{ \sqrt{\eps_1}, \eps_2, \eps_3 \}, \{ \sqrt{\eps_1}, \sqrt{\eps_2}, \eps_3\}$, and $\{\sqrt{\eps_1\eps_2}, \eps_2, \eps_3 \}$. We now eliminate certain possibilities. For convenience, we provide our argument in two parts.

\smallskip

 {\bf Part 1.} Since $A(F)$ is 2-elementary (from Lemma \ref{order A(F)(5,3,3)}), its 2-Hilbert class field $L(F)$ and its genus field $F_G$ must be the same, which is the field $\Q(\sqrt{2}, \sqrt{p_1}, \sqrt{q_1q_2})$. The field $F_G$ has three subfields that are bi-quadratic over $\Q$ which contain $F$. These are, $L_1 (= K_1) := \Q(\sqrt{2}, \sqrt{p_1q_1q_2}), L_2 := \Q(\sqrt{p_1}, \sqrt{2q_1q_2})$, and $L_3 := \Q(\sqrt{2p_1}, \sqrt{q_1q_2})$. Let $\ell^{\prime}$ be the prime above 2 in $F$. Then from the congruence modulo 8 and Legendre symbol criteria, we observe that the prime $\p^{\prime}_1$ and $\ell^{\prime}$ split completely only in the extension $L_3/F$. Similarly, the primes $\q^{\prime}_1$ and $\q^{\prime}_2$ split completely only in $L_2/F$. Thus, the primes $\p^{\prime}_1$ and $\ell^{\prime}$ have the same decomposition field $L_3$ in the extension $L(F)/F$, and  
the primes $\q^{\prime}_1$ and $\q^{\prime}_2$ have the decomposition field $L_2$ in $L(F)/F$. Thus, by Artin map, $\left( \dfrac{L(F)/F}{\p^{\prime}_1}\right) = \left( \dfrac{L(F)/F}{\ell^{\prime}}\right)$. Since the map is taken with respect to the extension $L(F)/F$, the symbols being equal implies that $[\p^{\prime}_1] = [\ell^{\prime}]$. Thus, the two ideals differ by a principal fractional ideal, say $\langle \beta \rangle$, where $\beta \in F^{\times}$. Therefore, $\p^{\prime}_1 = \langle \beta \rangle \ell^{\prime}$, which upon squaring implies $\langle p_1 \rangle  = \langle 2\beta^2 \rangle$. Hence, there exists $n \in \Z$ such that $p_1 = 2\beta^2\eps_2^n$. If $n$ is even, then $\sqrt{p_1} \in K_1$, which is not possible. Therefore, $n$ must be odd. This produces the equality $\sqrt{\eps_2} = \beta_1\sqrt{p_1/2}$, where $\beta_1^{-1} = \beta\eps_2^{\frac{n-1}{2}} \in F$. Now, if $\sqrt{\eps_2} \in K_1$, then again, $\sqrt{p_1} \in K_1$, which is a contradiction. Hence, $\sqrt{\eps_2} \not\in K_1$. Also, we stress that $K_1(\sqrt{\eps_2}) = K_1(\sqrt{p_1})$.

Now we shall proceed to prove that $\sqrt{\eps_1}$ and $\sqrt{\eps_1\eps_2}$ do not belong to $K_1$ if the ideal $\p_1$ is not principal in $K$, and only $\sqrt{\eps_1\eps_2}$ belongs to $K_1$ if $\p_1$ is principal in $K$.

\smallskip

 {\bf Part 2.} Since $\left( \dfrac{q_1}{p_1} \right) = 1$ and $\left( \dfrac{q_2}{p_1} \right) = 1$, $\#A_0 \geq 4$ by Lemma \ref{A_0 = 2}. As each of the primes $p_1$, $q_1$ and $q_2$ are ramified in the extension $K/\Q$, the order of the ideal classes $[\p_1], [\q_1]$ and $[\q_2]$ must be at the most 2. Given that $A_0$ is cyclic, it has exactly one element of order 2. Thus, at least two ideal classes out of $[\p_1], [\q_1]$ and $[\q_2]$ must be equal. We shall achieve our goal of proving that square-roots of certain  fundamental units are not present in $K_1$ by making the following claims:

\smallskip

\noindent {\bf Claim 1}: If $\p_1$ is principal, then $\#A_1 = 2\cdot\#A_0$. \\
If $\p_1$ is principal, then it must be equivalent to the ideal $\langle 2 \rangle$ in $K$. Therefore, there exists $\alpha \in K^{\times}$ such that $\p_1  = \langle 2 \alpha \rangle$. As argued in Part 1, there exists $\alpha_1 \in K^{\times}$ such that $\sqrt{p_1} = 2\alpha_1\sqrt{\eps_1}$. This again implies that $\sqrt{\eps_1} \not\in K_1$, and $K_1(\sqrt{\eps_1}) = K_1(\sqrt{p_1}) = K_1(\sqrt{\eps_2})$. It follows that $\sqrt{\eps_1\eps_2}\in K_1$ as it is fixed under the action of the Galois group. 
Hence, from Part 1, the system of fundamental units of $K_1$ is $\{ \sqrt{\eps_1\eps_2}, \eps_2,\eps_3\}$. Thus, $Q(K_1) = 2$, and by Theorem \ref{kubota}, $\#A_1 = \frac{1}{4} \#A_0 \cdot \#A(F)\cdot\#A(\Q(\sqrt{2}))\cdot Q(K_1) = 2\cdot\#A_0$.

\noindent From Lemma \ref{bound of A_{n+1}}, it is evident that $\#A_1 \neq 2\cdot\#A_0$ is equivalent to $\#A_1 = \#A_0$. Thus, we register here that $\#A_1 = \#A_0$ implies $\p_1$ is not principal in $K$. 

\smallskip

\noindent {\bf Claim 2}: The ideals $\q_1$ and $\q_2$ cannot be simultaneously principal.\\
Suppose on the contrary, both the ideals are principal. Then each of the ideals must be equivalent to the ideal $\langle 2 \rangle$ in $K$. Proceeding as Part 1, we obtain $K(\sqrt{\eps_1}) = K(\sqrt{q_1}) = K(\sqrt{q_2})$, which is a contradiction as $K(\sqrt{q_1}) \neq K(\sqrt{q_2})$. Therefore, our claim stands true. In addition, if $[\q_1] = [\q_2]$, then both the classes must be of order 2.

\smallskip

\noindent {\bf Claim 3}: The ideal $\p_1$ is principal if and only if $[\q_1] = [\q_2]$ in $\mathcal{C}l(K)$.\\
Suppose $\p_1$ is principal. Then from Claim 1, $K_1(\sqrt{\eps_1}) = K_1(\sqrt{p_1})$. If $[\q_1] \neq [\q_2]$, then exactly one of $\q_1$ and $\q_2$ must be principal. Without loss of generality, suppose $\q_1$ is principal (similar arguments are applicable for $\q_2$). Then the ideals $\p_1$ and $\q_1$ must be equivalent as ideals and must differ by a factor of a principal fractional ideal. This yields that $K_1(\sqrt{\eps_1}) = K(\sqrt{\frac{p_1}{q_1}}) = K_1(\sqrt{q_2}) \neq K_1(\sqrt{p_1})$. This is a contradiction, and hence both the classes $[\q_1]$ and $[\q_2]$ have to be equal (which internally implies that the classes should be of order 2 because of Claim 2). 

\smallskip

\noindent Conversely, suppose $[\q_1] = [\q_2]$, and $\p_1$ is not principal. Then all the three classes $[\p_1], [\q_1]$, and $[\q_2]$ must be equal with order 2 (from Claim 2). Now following the previous technique, $[\p_1] = [\q_1]$ implies that $K_1(\sqrt{\eps_1} ) = K_1(\sqrt{q_2})$, and $[\p_1] = [\q_2]$ implies $K_1(\sqrt{\eps_1}) = K_1(\sqrt{q_1})$, which cannot occur in unison. Hence there is an inconsistency, which implies our claim.

\smallskip

\noindent {\bf Claim 4}: If $\p_1$ is not principal, then $\#A_1 = \#A_0$.

\noindent If $\p_1$ is not principal, then by Claim 3, $[\q_1] \neq [\q_2]$. Thus,  exactly one of $\q_1$ or $\q_2$ is principal. Without loss of generality, suppose $\q_1$ is that non-principal ideal. Then, $\p_1$ and $\q_1$ must be equivalent and therefore, following the lines of argument in Part 1, $\sqrt{\eps_1} \not\in K_1$ and $K_1(\sqrt{\eps_1}) = K_1(\sqrt{\frac{p_1}{q_1}}) = K_1(\sqrt{ q_2}) \neq K_1(\sqrt{\eps_2})$. For that reason, both $\sqrt{\eps_1}$ and $\sqrt{\eps_2}$ are not in $K_1$.

\smallskip

\noindent If $\sqrt{\eps_1\eps_2} \in K_1$, then $\sqrt{\eps_1\eps_2} \in K_1(\sqrt{\eps_1})$, and this means that $\sqrt{\eps_2} \in K_1(\sqrt{\eps_1})$. Similarly, $\sqrt{\eps_1} \in K_1(\sqrt{\eps_2})$. This leads to the equality $K_1(\sqrt{\eps_1}) = K_1(\sqrt{\eps_2})$. But this is absurd because $K_1(\sqrt{\eps_1})= K_1(\sqrt{ q_2}) \neq K_1(\sqrt{p_1}) = K_1(\sqrt{\eps_2})$. Therefore, $\sqrt{\eps_1\eps_2} \not\in K_1$. The fundamental system of units of $K_1$ is the set $\{ \eps_1, \eps_2, \eps_3 \}$, and the Hasse unit index $Q(K_1)$ is equal to 1. From Theorem \ref{kubota}, $\#A_1 = 1/4 \cdot \#A_0 \cdot \#A(F)\cdot\#A(\Q(\sqrt{2})\cdot Q(K_1) = 2 = \#A_0$. Thus, Claim 4 follows.

\smallskip

\noindent  We have $\p_1$ is principal implying that $\#A_1 \neq \#A_0$ (from Part 1 and  Claim 1), and $\p_1$ is not principal implying that $\#A_1 = \#A_0$ (from Part 1 and Claim 4). This completes the proof that $\p_1$ is principal if and only if $\#A_1 \neq \#A_0$.    $\hfill\Box$

\subsection*{Proof of Corollary \ref{lambda of (5,3,3,1)}}
If the ideal $\p_1$ is principal, then there exist integers $a$ and $b$ of same parity such that $N_{K/\Q}( \frac{a+b\sqrt{p_1q_1q_2}}{2}) = p_1$ or $-p_1$, i.e, $a^2 - b^2 p_1q_1q_2 = 4p_1$ or $-4p_1$. If the norm is equal to $-p_1$, then taking equation modulo $q_1$, we obtain $a^2 \equiv -4p_1 \Mod{q_1}$, which indicates that $-p_1$ is a quadratic residue modulo $q_1$. But this is impossible as $\left(\dfrac{q_1}{p_1}\right) = \left(\dfrac{p_1}{q_1}\right) = 1$, and $q_1 \equiv 3 \Mod{8}$. As a result, if the prime $\p_1$ is principal, then there must exist integers $a$ and $b$ of same parity such that $a^2 -b^2p_1q_1q_2 = 4p_1$. If there are no such integers, then $\p_1$ is not principal.

\smallskip

From the Legendre symbol values and Lemma \ref{A_0 = 2}, the group $A_0$ is cyclic with order at least $4$. If $\p_1$ is not principal in $K$, then from Theorem \ref{(5,3,3,1)}, $\#A_1 = \#A_0$. From the congruence modulo 8 conditions, the prime above 2 is totally ramified in $K_n/K$ for all $n \geq 1$. Thus, $A_n$ is isomorphic to $\Z/2^{m}\Z$ for some $m \geq 2$ and for all $n \geq 0$. Hence, we deduce that the Iwasawa module $X_{\infty}$ is isomorphic to $\Z/2^{m}\Z$ for some $m \geq 2$, and the Iwasawa invariant $\lambda_2$ is equal to 0. The same holds when we study $F$ instead of $K$. The associated Iwasawa module has the same structure, with vanishing $\lambda$-invariant.   $\hfill\Box$

\smallskip

We exhibit Theorem \ref{(5,3,3,1)} through some examples in Tables 2 and 3. The computations have been carried out through \texttt{SageMath}.\\
\begin{table}[hbt!]
\begin{center}
\caption{Fields $K = \Q(\sqrt{p_1q_1q_2})$ where the prime $\p_1$ above $p_1$ is not principal}
\smallskip
\label{tab:table2}
\begin{tabular}{|c|c|c|c|c|c|} 
\hline 
$p_1$  & $q_1$  & $q_2$  & $\# A_0$  & $\# A_1$ \\
\hline
$5$ & $11$ & $19$ &  $4$ & $4$ \\
\hline
$5$ & $11$ & $139$  & $4$ & $4$ \\
\hline
$5$ & $11$ & $179$ &  $4$ & $4$ \\
\hline
$5$ & $19$ & $211$  &  $4$ & $4$ \\
\hline
$13$ & $43$ & $107$ &  $4$ & $4$ \\
\hline
$13$ & $131$ & $107$ &  $8$ & $8$ \\
\hline
$13$ & $107$ & $131$ &  $4$ & $4$ \\
\hline
$29$ & $59$ & $107$ &  $8$ & $8$ \\
\hline
$29$ & $59$ & $67$ &  $4$ & $4$ \\
\hline
$29$ & $67$ & $83$ & $4$ & $4$ \\
\hline

\end{tabular}
\end{center}
\end{table}

\begin{table}[hbt!]
\begin{center}
\caption{Fields $K = \Q(\sqrt{p_1q_1q_2})$ where the prime $\p_1$ above $p_1$ is principal}
\smallskip
\label{tab:table3}
\begin{tabular}{|c|c|c|c|c|c|} 
\hline 
$p_1$  & $q_1$  & $q_2$  &  $\# A_0$  & $\# A_1$ \\
\hline
$5$ & $11$ & $131$ &  $4$ & $8$ \\
\hline
$5$ & $19$ & $59$  &  $4$ & $8$ \\
\hline
$5$ & $11$ & $211$ &  $4$ & $8$ \\
\hline
$5$ & $19$ & $139$  &  $4$ & $8$ \\
\hline
$5$ & $19$ & $179$ &  $4$ & $8$ \\
\hline
$13$ & $43$ & $179$ &  $8$ & $16$ \\
\hline
$29$ & $59$ & $83$ &  $4$ & $8$ \\
\hline
$29$ & $83$ & $107$ &  $4$ & $8$ \\
\hline
$29$ & $59$ & $227$ & $4$ & $8$ \\
\hline
$53$ & $11$ & $43$ &  $16$ & $32$ \\
\hline

\end{tabular}
\end{center}
\end{table}

\subsection*{Proof of Theorem \ref{(5,7,3)}}
The proof of Theorem \ref{(5,7,3)} predominantly follows the approach used in the proof of Theorem \ref{(5,3,3,1)}. We furnish the proof for the case  $\left( \dfrac{p_1}{q_2} \right) = 1$ as the proof for the other case is similar. When $\left( \dfrac{q_1q_2}{p_1} \right) = -1$, the prime $p_1$ is inert in $\Q( \sqrt{q_1q_2})$. Thus, the prime $\p_1$ above $p_1$ in $K$ is inert in the extension $K_G = K(\sqrt{p_1})$. Also, $\q_1$ is inert in $K_G/K$ as $\left( \dfrac{q_1}{p_1} \right) = -1$. Since $A_0$ is cyclic of order 2, $[\p_1] = [\q_1]$. Thus, $\sqrt{p_1} = \sqrt{q_1}\alpha_1\sqrt{\eps_1}$, for some $\alpha_1 \in K^{\times}$. This implicates that $\sqrt{\eps_1} \not\in K_1$, and $K_1(\sqrt{\eps_1}) = K_1(\sqrt{q_2})$.

\smallskip

We recall the fields $L_1$, $L_2$, and $L_3$ such that $F \subset L_i \subset F_G$ for $i =1,2,3$, defined in Part 1 of the proof of Theorem \ref{(5,3,3,1)}. The primes $\p_1^{\prime}$ and $\q_2^{\prime}$ above $p_1$ and $q_2$ in $F$ have the same decomposition field $L_2$. Thus, $[\p_1] = [\q_2]$ as their corresponding Artin symbols are equal with respect to the extension $F_G/F = L(F)/F$. Hence, we deduce that $\sqrt{\eps_2} \not\in K_1$, and $K_1(\sqrt{\eps_2}) = K_1(\sqrt{q_1})$. As explained in the last part of the proof of Theorem \ref{(5,3,3,1)}, since $K_1(\sqrt{\eps_1}) \neq K_1(\sqrt{\eps_2})$, $\sqrt{\eps_1\eps_2} \not\in K_1$. Again by Theorem \ref{kubota}, $Q(K_1) = 1$, and $\#A_1 = \#A_0 = 2$. By Fukuda's result on the stability of order of $A_n$ (Theorem \ref{fukuda's result}), $\#A_n = 2$ for all $n \geq 0$. Thus, $X_{\infty}$ is isomorphic to $\Z/2\Z$. The same occurs when we consider the field $F$ instead of $K$ as both these fields have the same $\Z_2$-extension, barring the base fields. Consequently, the Iwasawa module $X_{\infty}$ corresponding to $F$ is isomorphic to $\Z/2\Z$ and its $\lambda$-invariant vanishes. $\hfill\Box$

\section{Concluding Remarks}
In this section, we shall focus on the case $p_1 \equiv 5 \Mod{8}$, $q_1 \equiv 7 \Mod{8}$, $q_2 \equiv 3 \Mod{8}$, and $\left( \dfrac{q_1}{p_1} \right) = 1$. We find that the prime $\q_1^{\prime}$ splits completely in all the fields $L_i$, $i=1,2,3$. Thus, the ideal $\q_1^{\prime}$ is principal in $F$  by class field theory. Accordingly, there exists some $\alpha \in F^{\times}$ such that $\q_1^{\prime} = \langle 2\alpha \rangle$. Again squaring both sides and equating the resultant principal ideals, we obtain $\sqrt{q_1} = 2\alpha_1\sqrt{\eps_2}$ for some $\alpha_1 \in F^{\times}$. That being so, $\sqrt{\eps_2} \not\in K_1$, and $K_1(\sqrt{\eps_2}) = K_1(\sqrt{q_1})$. 

\smallskip

{\bf Case 1:} If $\left(\dfrac{q_2}{p_1}\right) = -1$, then the ideals $\p_1$ and $\q_2$ are non-principal in $K$, and $A_0$ is cyclic of order 2 by Lemma \ref{A_0 = 2}. Thus, $[\p_1] = [\q_2]$, which leads to $\sqrt{\eps_1} \not\in K_1$, and $K_1(\sqrt{\eps_1}) = K_1(\sqrt{q_1}) = K_1(\sqrt{\eps_2})$. Therefore, $\sqrt{\eps_1\eps_2} \in K_1$, and $Q(K_1) = 2$, which brings about the relation $\#A_1 = 2\cdot \#A_0$. Since it has been proven in \cite{mouhib-mova} that the Iwasawa invariant $\lambda_2$ of $K$ is equal to 0, $X_{\infty}$ is finite and cyclic. Additionally, $\#A_1 = 2\cdot\#A_0 = 4$ implies that $X_{\infty}$ is isomorphic to $\Z/2^m\Z$, for some $m \geq 2$.

\smallskip

{\bf Case 2:} If $\left(\dfrac{q_2}{p_1}\right) = 1$, then we have two subcases, depending on whether $\mathfrak{q}_1$ is principal or not.  If $\q_1$ is principal in $K$, then from the equivalence of the principal ideals $\q_1$ and $\langle 2 \rangle$, we can prove that $\sqrt{\eps_1}$ and $\sqrt{\eps_2}$ do not belong to $K_1$, $\sqrt{\eps_1\eps_2} \in K_1$, $Q(K_1) = 2$, and $\#A_1 = 2\cdot \#A_0$. From Lemma \ref{A_0 = 2}, $\#A_0 \geq 4$, and thus, $\#A_1 \geq 8$, and finally, $X_{\infty}$ is of the form $\Z/2^{m}\Z$, for some $m \geq 3$. \\
As in the proof of Theorem \ref{(5,3,3,1)}, we make the following claims to prove that $\#A_1 = \#A_0$ if and only if $\q_1$ is not principal in $K$:\\
{\bf Claim 1.} If $\q_1$ is principal, then $\#A_1 = 2 \cdot\#A_0$. Therefore, if $\#A_1 = \#A_0$, then $\mathfrak{q}_1$ is non principal in $K$.\\
{\bf Claim 2.} The ideals $\p_1$ and $\q_2$ cannot be simultaneously principal in $K$.\\
{\bf Claim 3.} The ideal $\q_1$ is principal in $K$ if and only if $[\p_1] = [\q_2]$ in $\mathcal{C}l(K)$.\\
{\bf Claim 4.} If the ideal $\q_1$ is not principal in $K$, then $\#A_1 = \#A_0$. Thus, $A_n$ is isomorphic to $A_0$ for all $n \geq 1$. \\
These claims merge to prove that $X_{\infty}$ is isomorphic to $\Z/2^{m}\Z$ for some $m\geq 2$ when $\left( \dfrac{q_1}{p_1} \right) = \left( \dfrac{q_2}{p_1} \right) = 1$, and the ideal $\q_1$ is not principal in $K$. Therefore from both the cases, we observe that when $\left( \dfrac{q_1}{p_1} \right) = 1$, $X_{\infty}$ is not just finite and cyclic, but also, its order must be greater than or equal to 4.

{\bf Acknowledgements.} The authors would like to sincerely thank the anonymous referee, whose valuable comments have helped in improving the manuscript. The authors also take immense pleasure to thank Indian Institute of Technology Guwahati for providing excellent facilities to carry out this research. The research of the second author is partially funded by the MATRICS, SERB research grant MTR/2020/000467.

\end{document}